\def\le{\leqslant}
\def\a{\alpha}
\def\e{\epsilon}
\def\s{\sigma}
\def\t{\tau}
\def\th{\theta}
\def\k{\kappa}
\def\i{^{-1}}
\def\<{\langle}
\def\>{\rangle}
\newcommand{\bG}{\mathbf G}
\newcommand{\BC}{\ensuremath{\mathbb {C}}\xspace}
\newcommand{{\BG}}{\ensuremath{\mathbb {G}}\xspace}
\newcommand{{\BK}}{\ensuremath{\mathbb {K}}\xspace}
\newcommand{\BN}{\ensuremath{\mathbb {N}}\xspace}
\newcommand{\BQ}{\ensuremath{\mathbb {Q}}\xspace}
\newcommand{\BR}{\ensuremath{\mathbb {R}}\xspace}
\newcommand{\BS}{\ensuremath{\mathbb {S}}\xspace}
\newcommand{\CI}{\ensuremath{\mathcal {I}}\xspace}
\newcommand{\CO}{\ensuremath{\mathcal {O}}\xspace}
\newcommand{\Ad}{{\mathrm{Ad}}}
\newcommand{\ad}{{\mathrm{ad}}}
\DeclareMathOperator{\Aut}{Aut}
\newcommand{\aff}{\mathrm{aff}}
\def\tW{\tilde W}
\def\kk{\mathbf k}
\newtheorem{theorem}{Theorem}
\newtheorem{proposition}[theorem]{Proposition}
\newtheorem{lemma}[theorem]{Lemma}
\theoremstyle{definition}
\newtheorem{remark}[theorem]{Remark}
\numberwithin{equation}{section}
\numberwithin{theorem}{section}
\renewcommand{\to}{%
   \ifbool{@display}{\longrightarrow}{\rightarrow}%
   }
\let\shortmapsto\mapsto
\renewcommand{\mapsto}{%
   \ifbool{@display}{\longmapsto}{\shortmapsto}%
   }
\newlength{\olen}
\newlength{\ulen}
\newlength{\xlen}
\newcommand{\xra}[2][]{%
   \ifbool{@display}%
      {\settowidth{\olen}{$\overset{#2}{\longrightarrow}$}%
       \settowidth{\ulen}{$\underset{#1}{\longrightarrow}$}%
       \settowidth{\xlen}{$\xrightarrow[#1]{#2}$}%
       \ifdimgreater{\olen}{\xlen}%
          {\underset{#1}{\overset{#2}{\longrightarrow}}}%
          {\ifdimgreater{\ulen}{\xlen}%
             {\underset{#1}{\overset{#2}{\longrightarrow}}}
             {\xrightarrow[#1]{#2}}}}%
      {\xrightarrow[#1]{#2}}
   }
\newcommand{\xyra}[2][]{%
   \settowidth{\xlen}{$\xrightarrow[#1]{#2}$}%
   \ifbool{@display}%
      {\settowidth{\olen}{$\overset{#2}{\longrightarrow}$}%
       \settowidth{\ulen}{$\underset{#1}{\longrightarrow}$}%
       \ifdimgreater{\olen}{\xlen}%
          {\mathrel{\xymatrix@M=.12ex@C=3.2ex{\ar[r]^-{#2}_-{#1} &}}}%
          {\ifdimgreater{\ulen}{\xlen}%
             {\mathrel{\xymatrix@M=.12ex@C=3.2ex{\ar[r]^-{#2}_-{#1} &}}}
             {\mathrel{\xymatrix@M=.12ex@C=\the\xlen{\ar[r]^-{#2}_-{#1} &}}}}}%
      {\mathrel{\xymatrix@M=.12ex@C=\the\xlen{\ar[r]^-{#2}_-{#1} &}}}%
   }
\newcommand{\xla}[2][]{%
   \ifbool{@display}%
      {\settowidth{\olen}{$\overset{#2}{\longleftarrow}$}%
       \settowidth{\ulen}{$\underset{#1}{\longleftarrow}$}%
       \settowidth{\xlen}{$\xleftarrow[#1]{#2}$}%
       \ifdimgreater{\olen}{\xlen}%
          {\underset{#1}{\overset{#2}{\longleftarrow}}}%
          {\ifdimgreater{\ulen}{\xlen}%
             {\underset{#1}{\overset{#2}{\longleftarrow}}}
             {\xleftarrow[#1]{#2}}}}%
      {\xleftarrow[#1]{#2}}
   }
\newcommand{\isoarrow}{%
   \ifbool{@display}{\overset{\sim}{\longrightarrow}}{\xrightarrow\sim}%
   }
\begin{document}

\title[]{Affine Deligne-Lusztig varieties associated with generic Newton points}
\author{Xuhua He}
\address{X.~H., The Institute of Mathematical Sciences and Department of Mathematics, The Chinese University of Hong Kong, Shatin, N.T., Hong Kong SAR, China}
\email{xuhuahe@math.cuhk.edu.hk}

\thanks{}

\keywords{Affine Deligne-Lusztig varieties, Iwahori-Weyl groups, Demazure product}
\subjclass[2010]{20G25, 11G25, 20F55}

\date{\today}

\begin{abstract}
This paper gives an explicit formula of the dimension of affine Deligne-Lusztig varieties associated with generic Newton point in terms of Demazure product of Iwahori-Weyl groups. 
\end{abstract}

\maketitle
{\centering{\em To George Lusztig with admiration\par}}

\section*{Introduction}

Let $F$ be a non-archimedean local field, and $\breve F$ be the completion of its maximal unramified extension. Let $\s$ be the Frobenius automorphism of $\breve F$ over $F$. Let $\bG$ be a connected reductive group over $F$. Let $\breve \CI$ be the standard Iwahori subgroup of $\bG(\breve F)$. Let $w$ be an element in the Iwahori-Weyl group $\tW$ and $b \in \bG(\breve F)$. The affine Deligne-Lusztig variety associated to $(w, b)$ is defined to be $$X_w(b)=\{g \breve \CI \in \bG(\breve F)/\breve \CI; g \i b \s(g) \in \breve \CI w \breve \CI\}.$$ This is a subscheme locally of finite type, of the affine flag variety. It is introduced by Rapoport in \cite{Ra-guide} and plays an important role when studying the special fiber of both Shimura varieties and moduli spaces of Shtukas.

Let $B(\bG)$ be the set of $\s$-conjugacy classes of $\bG(\breve F)$. The set $B(\bG)$ is classified by the Kottwitz map and the Newton map. The set $B(\bG)$ is equipped with a natural partial order by requiring the equality under the Kottwitz map and the dominance order on the associated Newton points. It is easy to see that if $b$ and $b'$ are in the same $\s$-conjugacy class, then $X_w(b)$ and $X_w(b')$ are isomorphic. The set $B(G)_w=\{[b] \in B(G); X_w(b) \neq \emptyset\}$ contains a unique maximal element, which we denote by $[b_w]$. The variety $X_w(b_w)$ is called the affine Deligne-Lusztig varieties associated with generic Newton points. It has been studied in \cite{Mi} and \cite{MV}. 

The main purpose of this note is as follows. 

\begin{theorem}\label{main}
Let $w \in \tW$ and $[b_w]$ be the maximal element in $B(G)_w$. Then $$\dim X_w(b_w)=\ell(w)-\lim_{n \to \infty} \frac{\ell(w^{\ast_\s, n})}{n}.$$ 
\end{theorem}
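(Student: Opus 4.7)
My plan is to combine the dimension formula for generic affine Deligne-Lusztig varieties established in \cite{MV} with an asymptotic analysis of the Demazure powers of $w$. More precisely, I expect to use the formula $\dim X_w(b_w) = \ell(w) - \langle 2\rho, \bar\nu_{b_w}\rangle$, so that the theorem reduces to the identity
\begin{equation*}
\lim_{n \to \infty} \frac{\ell(w^{*_\sigma, n})}{n} = \langle 2\rho, \bar\nu_{b_w}\rangle,
\end{equation*}
where $\bar\nu_{b_w}$ is the dominant Newton point of $[b_w]$. Existence of the limit is automatic from Fekete's subadditive lemma: the associativity $w^{*_\sigma, m+n} = w^{*_\sigma, m} *_\sigma \sigma^m(w^{*_\sigma, n})$ together with the inequality $\ell(x *_\sigma y) \le \ell(x) + \ell(y)$ and the $\sigma$-invariance of length yield $a_{m+n} \le a_m + a_n$ for $a_n := \ell(w^{*_\sigma, n})$, so $a_n/n$ converges to its infimum.

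To identify the limit, I would prove the stronger statement that the translation part $\mu_n$ of $w^{*_\sigma, n}$ satisfies $\bar\mu_n / n \to \bar\nu_{b_w}$ in the dominant chamber (where $\bar\mu_n$ denotes a dominant $W_0$-conjugate of $\mu_n$). The mechanism is the defining property of the Demazure product on Iwahori double cosets, $\overline{\breve\CI w_1 \breve\CI \cdot \breve\CI w_2 \breve\CI} = \breve\CI (w_1 *_\sigma w_2) \breve\CI$: a generic $g \in \breve\CI w \breve\CI$, when fed into the iterated $\sigma$-product $g \, \sigma(g)\cdots \sigma^{n-1}(g)$, produces an element lying generically in $\breve\CI w^{*_\sigma, n} \breve\CI$, and on the other hand this iterated $\sigma$-product is a standard witness for the Newton point of the $\sigma$-conjugacy class $[g]$, so its Newton point equals $n\bar\nu_{[g]} = n\bar\nu_{b_w}$ (by the definition of $b_w$ as the maximum of $B(G)_w$ and since the locus where $[g] = [b_w]$ is open dense in $\breve\CI w \breve\CI$). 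Extracting the translation part and passing to the limit then gives $\bar\mu_n/n \to \bar\nu_{b_w}$. Finally, the standard length formula $\ell(t^\lambda u) = \sum_{\alpha > 0} |\langle \lambda, \alpha\rangle| + O_u(1)$ applied to $w^{*_\sigma, n}$ converts the translation asymptotic into the desired length asymptotic $\ell(w^{*_\sigma, n})/n \to \langle 2\rho, \bar\nu_{b_w}\rangle$.

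The principal obstacle is the rigorous justification that the iterated $\sigma$-product of a generic element of $\breve\CI w \breve\CI$ indeed both lands generically in $\breve\CI w^{*_\sigma, n} \breve\CI$ and realizes the Newton point $\bar\nu_{b_w}$; while each ingredient is essentially standard, combining them requires verifying that two open dense subsets of $\breve\CI w \breve\CI$ (the preimage of the open stratum of the iterated multiplication map, and the generic Newton stratum) intersect nontrivially, and one must also control the error term in the translation-to-length asymptotic (which depends on the finite Weyl part $u_n$ and, for non-basic $b_w$, on the defect). The inductive structure of Demazure products together with the maximality characterization of $b_w$ should yield these facts, but organizing them into a clean statement compatible with $\sigma$-twisting is the technical heart of the argument.
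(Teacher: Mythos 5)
Your proposal correctly identifies the framework and the reduction: the dimension formula $\dim X_w(b_w)=\ell(w)-\ell(\CO_w)$ (which, via the identity $\ell(w')=\langle 2\rho,\bar\nu_{[w']}\rangle$ for $\s$-straight $w'$, is the same as your $\ell(w)-\langle 2\rho,\bar\nu_{b_w}\rangle$) reduces the theorem to showing $\lim_n \ell(w^{\ast_\s,n})/n = \ell(\CO_w)$, and your observation that Fekete's subadditive lemma gives existence of the limit for free is a nice touch not made explicit in the paper. However, the technical heart -- that a generic $g\in\breve\CI w\breve\CI$ in the top Newton stratum also satisfies $g^{\s,n}\in\breve\CI w^{\ast_\s,n}\breve\CI$ -- is not a matter of intersecting two open dense subsets whose non-emptiness is ``essentially standard.'' The problem is that the set $\{g\in\breve\CI w\breve\CI : g^{\s,n}\in\breve\CI w^{\ast_\s,n}\breve\CI\}$, while open, is not obviously non-empty: the image of the $\s$-twisted diagonal inside $(\breve\CI w\breve\CI)(\breve\CI\s(w)\breve\CI)\cdots(\breve\CI\s^{n-1}(w)\breve\CI)$ is much thinner than the full product and could a priori avoid the top Schubert cell $\breve\CI w^{\ast_\s,n}\breve\CI$ entirely. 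This is exactly the point the paper flags in the introduction (``it is not clear if $g^{\s,n}\in\breve\CI w^{\ast,n}\breve\CI$''), and it is the reason the paper does not argue directly over $\breve F$.

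The paper's resolution is a genuinely different idea that your proposal does not anticipate: one first reduces from the $\s$-conjugation problem over $\breve F$ to an ordinary $\th$-conjugation problem (for a diagram automorphism $\th$ of $W_{\aff}$) on a split semisimple group $\bG'$ over $\BC((\e))$, following \cite{HN3}, and then uses Lusztig's theory of total positivity: the totally positive piece $U^-_x$ satisfies $U^-_{y_1}U^-_{y_2}=U^-_{y_1\ast y_2}$, so any $g\in U^-_x$ has $g^{\th,n}\in U^-_{x^{\ast_\th,n}}\subset\breve\CI' x^{\ast_\th,n}\breve\CI'$ for \emph{all} $n$ simultaneously, while an explicit conjugation argument shows $g\in[\CO_x]$. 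This gives a single $g$ (hence a single conjugator $h$ and a single length bound $2N_0$) working uniformly in $n$, from which both the upper and lower bounds on $\ell(x^{\ast_\th,n})/n$ drop out immediately. Your per-$n$ generic-element strategy would, even if the non-emptiness issue were resolved, still need a uniform control that your proposal does not supply; the vague appeal to ``the inductive structure of Demazure products together with the maximality characterization of $b_w$'' does not substitute for the total-positivity input, which is the genuinely new ingredient here.
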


Here $\ast$ is the Demazure product on $\tW$ and $$w^{\ast_\s, n}=w \ast \s(w) \ast \cdots \ast \s^{n-1}(w)$$ is the $n$-th $\s$-twisted Demazure power of $w$. 

Now we discuss the motivation for this formula and the outline of the proof. 

For simplicity, we only discuss the split group case here. In this case, $\s$ acts trivially on $\tW$ and we may simply write $w^{\ast, n}$ for $w^{\ast_\s, n}$. 

There is a natural bijection between the poset $B(G)$ and the poset of straight conjugacy classes in $\tW$, established in \cite{He14} \& \cite{He16}. Here by definition, an element $x \in \tW$ is straight if $\ell(x^n)=n\ell(x)$ for all $n \in \BN$ and a conjugacy class of $\tW$ is straight if it contains a straight element. 

Let $\CO_w$ be the straight conjugacy class associated to $[b_w]$ and $w'$ be a minimal length element in $C_w$. Then we have $\dim X_w(b_w)=\ell(w)-\ell(w')$. In particular, if $w$ is a straight element, then we may take $w'=w$. In this case, $\dim X_w(b_w)=0$. By definition, for straight element $w$ we have $\ell(w^{\ast, n})=\ell(w^n)=n\ell(w)$. So the statement is obvious in this case. Theorem \ref{main} gives an estimate on the ``non-straightness'' of the element $w$. 

By our assumption, any generic element in the double coset $\breve \CI w \breve \CI$ is $\s$-conjugate to an element in $\breve \CI w' \breve \CI$. Let $g$ be a generic element in $\breve \CI w \breve \CI$ and $g' \in \breve \CI w' \breve \CI$ such that $g$ and $g'$ are $\s$-conjugate by an element $h \in \bG(\breve F)$. We consider the $\s$-twisted power defined by $g^{\s, n}=g \s(g) \cdots \s^{n-1}(g)$. Then for any $n \in \BN$, $g^{\s, n}$ and $(g')^{\s, n}$ are $\s^n$-conjugate by the same element $h$. By the straightness assumption on $\CO_w$, we have $(g')^{\s, n} \in \breve \CI (w')^{n} \breve \CI$. On the other hand, we have $$g^{\s, n} \in (\breve \CI w \breve \CI)  (\breve \CI w \breve \CI) \cdots (\breve \CI w \breve \CI) \subset \overline{\breve \CI w^{\ast,n} \breve \CI}.$$ However, it is not clear if $g^{\s, n} \in \breve \CI w^{\ast, n} \breve \CI$. 

The trick we will use to bypass this difficulty is to apply the technique in \cite{HN3} and to translate the question on the $\s$-conjugation action on $\bG(\breve F)$ to the question on the ordinary conjugation action on a reductive $\bG'$ over $\mathbb C((\e))$. Let $\breve \CI'$ be an Iwahori subgroup of $\bG'(\BC((\e)))$. Using Lusztig's theory of total positivity \cite{Lu-1} and \cite{Lu-2}, we show that for generic element $g \in \breve \CI' w \breve \CI'$, $g^n \in \breve \CI' (w^{\ast, n}) \breve \CI'$. The condition that there exists $h \in \bG'(\mathbb C((\e))$ such that $\breve \CI' (w^{\ast, n}) \breve \CI' \cap h (\breve \CI' (w')^{n} \breve \CI') h \i \neq\emptyset$ for all $n \in \BN$ implies that $\ell(w')=\lim_{n \to \infty} \frac{\ell(w^{\ast, n})}{n}$. This finishes the proof. 

\subsection*{Acknowledgments:} X.H.~is partially supported by a start-up grant and by funds connected with Choh-Ming Chair at CUHK, and by Hong Kong RGC grant 14300220.

\section{Preliminary}

\subsection{Notations}\label{sec:notation} 

Let $\bG$ be a connected reductive group over a non-archimedean local field $F$. Let $\breve F$ be the completion of the maximal unramified extension of $F$ and $\s$ be the Frobenius morphism of $\breve F/F$. The residue field of $F$ is a finite field $\mathbb F_q$ and the residue field of $\breve F$ is the algebraically closed field $\bar{\mathbb F}_q$. We write $\breve G$ for $\bG(\breve F)$. We use the same symbol $\s$ for the induced Frobenius morphism on $\breve G$. 

Let $S$ be a maximal $\breve F$-split torus of $\bG$ defined over $F$, which contains a maximal $F$-split torus. Let $\mathcal A$ be the apartment of $\bG_{\breve F}$ corresponding to $S_{\breve F}$. We fix a $\s$-stable alcove $\mathfrak a$ in $\mathcal A$, and let $\breve \CI \subset \breve G$ be the Iwahori subgroup corresponding to $\mathfrak a$. Then $\breve \CI$ is $\s$-stable.

Let $T$ be the centralizer of $S$ in $\bG$. Then $T$ is a maximal torus. We denote by $N$ the normalizer of $T$ in $\bG$. The \emph{Iwahori--Weyl group} (associated to $S$) is defined as $$\tW= N(\breve F)/T(\breve F) \cap \breve \CI.$$ For any $w \in \tW$, we choose a representative $\dot w$ in $N(\breve F)$. The action $\s$ on $\breve G$ induces a natural action of $\s$ on $\tW$, which we still denote by $\s$. 

We denote by $\ell$ the length function on $\tW$ determined by the base alcove $\mathfrak a$ and denote by $\tilde \BS$ the set of simple reflections in $\tW$. Let $W_{\aff}$ be the subgroup of $\tW$ generated by $\tilde \BS$. Then $W_{\aff}$ is an affine Weyl group. Let $\Omega \subset \tW$ be the subgroup of length-zero elements in $\tW$. Then $$\tW=W_{\aff} \rtimes \Omega.$$ Since the length function is compatible with the $\s$-action, the semi-direct product decomposition $\tW=W_{\aff} \rtimes \Omega$ is also stable under the action of $\s$. 

For any $w \in \tW$, we choose a representative in $N(\breve F)$, which we still denote by $w$. 



\subsection{The $\s$-conjugacy classes of $\breve G$}
The $\s$-conjugation action on $\breve G$ is defined by $g \cdot_\s g'=g g' \s(g) \i$ for $g, g' \in \breve G$. Let $B(\bG)$ be the set of $\s$-conjugacy classes on $\breve G$. The classification of the $\s$-conjugacy classes is due to Kottwitz \cite{Ko1} and \cite{Ko2}. Any $\s$-conjugacy class $[b]$ is determined by two invariants: 
\begin{itemize}
	\item The element $\k([b]) \in \pi_1(\bG)_{\s}$; 
	
	\item The Newton point $\nu_b \in \big((X_*(T)_{\Gamma_0, \BQ})^+\big)^{\langle\sigma\rangle}$. 
\end{itemize}

Here $-_\s$ denotes the $\s$-coinvariants, 
$(X_*(T)_{\Gamma_0, \BQ})^+$ denotes the intersection of  $X_*(T)_{\Gamma_0}\otimes \BQ=X_*(T)^{\Gamma_0}\otimes \BQ$ with the set $X_*(T)_\BQ^+$ of dominant elements in $X_*(T)_\BQ$. 

We denote by $\le$ the dominance order on $X_*(T)_\BQ^+$, i.e., for $\nu, \nu' \in X_*(T)_\BQ^+$, $\nu \le \nu'$ if and only if $\nu'-\nu$ is a non-negative (rational) linear combination of positive roots over $\breve F$. The dominance order on $X_*(T)_\BQ^+$ extends to a partial order on $B(\bG)$. Namely, for $[b], [b'] \in B(\bG)$, $[b] \le [b']$ if and only if $\k([b])=\k([b'])$ and $\nu_b \le \nu_{b'}$. 

\subsection{The straight $\s$-conjugacy classes of $\tW$} Let $w \in \tW$ and $n \in \BN$. The {\it $n$-th $\s$-twisted power} of $w$ is defined by $$w^{\s, n}=w \s(w) \cdots \s^{n-1}(w).$$ In the case where $\s$ acts trivally on $\tW$, we have $w^{\s, n}=w^n$ is the ordinary $n$-th power of $w$. 

By definition, an element $w \in \tW$ is called {\it $\s$-straight} if for any $n \in \BN$, $\ell(w^{\s, n})=n \ell(w)$. A $\s$-conjugacy class of $\tW$ is {\it straight} if it contains a $\s$-straight element. Let $B(\tW, \s)_{\text{str}}$ be the set of straight $\s$-conjugacy classes of $\tW$. The following result is proved in \cite[Theorem 3.7]{He14}. 

\begin{theorem}
The map $w \mapsto [w]$ induces a natural bijection $$\Psi: B(\tW, \s)_{\text{str}} \to B(\bG).$$ 
\end{theorem}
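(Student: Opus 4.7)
The plan is to define $\Psi$ by sending a $\s$-straight $w \in \tW$ to the class of any lift $\dot w \in N(\breve F) \subset \breve G$, and to verify bijectivity in three steps. For well-definedness: two lifts of the same $w$ differ by an element of the bounded subgroup $T(\breve F) \cap \breve \CI$, which is pro-solvable (a pro-$p$ radical extended by a torus), so a Lang--Steinberg type argument for $\s$-twisted conjugation shows the two lifts are $\s$-conjugate in $\breve G$. Similarly, if $w' = u w \s(u)\i$ in $\tW$, then $\dot{w'}$ and $\dot u\,\dot w\,\s(\dot u)\i$ differ by a $T(\breve F) \cap \breve \CI$-factor, reducing to the previous case.

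For injectivity, I would attach to each $w \in \tW$ an intrinsic Newton vector $\bar\nu_w \in X_*(T)_\BQ^+$, defined as the dominant representative of $\tfrac{1}{n}$ times the translation part of $w^{\s,n}$ for $n$ large, together with an image $\k_w \in \pi_1(\bG)_\s$ coming from $\tW \twoheadrightarrow \Omega$. The key calculation is that for $\s$-straight $w$, the pair $(\bar\nu_w, \k_w)$ equals the invariants $(\nu_{b}, \k([b]))$ of $[b] = [\dot w]$: straightness forces $\dot w \cdot \s(\dot w) \cdots \s^{n-1}(\dot w) \in \breve \CI \dot{w}^{\s,n} \breve \CI$ without any length drop, so the valuations of eigenvalues of this product---which compute the Newton point---are dictated by the translation part of $w^{\s,n}$. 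Given this identification, injectivity of $\Psi$ reduces to the combinatorial statement that two $\s$-straight elements of $\tW$ with the same $\bar\nu$ and $\k$ are $\s$-conjugate in $\tW$. I would prove this by descending to the standard $\breve F$-Levi $M$ whose root system kills $\bar\nu$, inside which a straight element becomes a central translation times a length-zero element, and then comparing representatives in the group of length-zero elements of the Iwahori--Weyl group of $M$.

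For surjectivity, given $[b] \in B(\bG)$ with invariants $(\k, \nu)$, I would choose the $\s$-stable standard Levi $M \subset \bG$ whose roots are exactly those orthogonal to $\nu$. Then $\nu$ is central in $M$, and by Kottwitz's classification of basic classes $[b]$ admits a representative in $M(\breve F)$ that is basic in $M$. Such a representative corresponds, in the Iwahori--Weyl group of $M$, to a central translation times a length-zero element; this element is manifestly $\s$-straight, because in its $\s$-powers no cancelling positive affine roots can appear. Matching invariants through the inclusion into $\tW$ then yields $\Psi([w]) = [b]$.

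The main obstacle is the reduction-to-Levi step in both injectivity and surjectivity: in the non-split case $\s$ permutes the affine simple reflections nontrivially, and the Levi $M$ (hence the corresponding sub-Iwahori--Weyl group of $\tW$) must be chosen $\s$-equivariantly, with the length function on $\tW$ compatibly controlled under the inclusion from the sub-Iwahori--Weyl group. This compatibility is what makes the combinatorial classification of straight $\s$-conjugacy classes nontrivial, and is where the bulk of the technical work lies.
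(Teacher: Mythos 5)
The paper offers no proof of this theorem; it is quoted from \cite[Theorem 3.7]{He14}, and your outline follows essentially the same route as that reference: compute the Kottwitz invariants $(\k,\nu)$ of a lift $\dot w \in N(\breve F)$ combinatorially from $w$, and reduce the classification of straight $\s$-conjugacy classes to length-zero elements of the Iwahori--Weyl group of the centralizer Levi $M_\nu$. Two remarks. First, your invariant computation is right, but straightness plays no role in it: for \emph{any} $w \in \tW$ the class $[\dot w]$ is already independent of the choice of lift, with $\k$ the image of $w$ in $\pi_1(\bG)_\s$ and $\nu$ the dominant representative of $\lim_n \frac{1}{n}(\text{translation part of } w^{\s,n})$; straightness is what makes the restriction of $w \mapsto [\dot w]$ to straight classes a bijection, not what makes the invariants computable. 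Second, the step you flag as carrying the technical weight --- that every $\s$-straight element is $\s$-conjugate in $\tW$ to a length-zero element of the $\s$-stable $\tW_{M_\nu}$, with lengths compatible under the inclusion --- is indeed a genuine theorem (the fundamental-alcove/$P$-alcove results underlying \cite[\S 3]{He14}) rather than a formality; granting it, your injectivity and surjectivity arguments do close, since length-zero elements of $\tW_M$ with the same image in $\pi_1(M)_\s$ are $\s$-conjugate under $\Omega_M$ because $\Omega_M$ is abelian.
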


\subsection{Affine Deligne-Lusztig varieties} Following \cite{Ra-guide}, we define the affine Deligne-Lusztig variety $X_w(b)$. Let $\mathrm{Fl}=\bG/\breve \CI$ be the affine flag variety. For any $w \in \tW$ and $b \in \bG$, we set $$X_w(b)=\{g \breve \CI \in \mathrm{Fl}; g \i b \s(g) \in \breve \CI w \breve \CI\}.$$ In the equal characteristic, $X_w(b)$ is the set of $\bar{\mathbb F}_q$-points of a scheme. In the equal characteristic, $X_w(b)$ is the set of $\bar{\mathbb F}_q$-points of a perfect scheme (see \cite{BS} and \cite{Zhu}). 

It is easy to see that $X_w(b)$ only depends on $w$ and the $\s$-conjugacy class $[b]$ of $b$. For any $w \in \tW$, we set $$B(\bG)_w=\{[b] \in B(G); X_w(b) \neq \emptyset\}.$$ Let $[b_w]$ be the $\s$-conjugacy class in the (unique) generic point of $\breve \CI w \breve \CI$. Then $[b_w]$ is the unique maximal element in $B(\bG)_w$ with respect to the partial ordering $\le$ on $B(\bG)$ (see \cite[Definition 3.1]{MV}). We choose a representative $b_w \in [b_w]$ and call $X_w(b_w)$ the {\it affine Delinge-Lusztig variety associated with the generic Newton point} of $w$. 

\subsection{Demazure product} Now we recall the Demazure product $\ast$ on $\tW$. By definition, $(W, \ast)$ is a monoid such that $w \ast w'=w w'$ for any $w, w' \in \tW$ if $\ell(w w')=\ell(w)+\ell(w')$ and $s \ast w=w$ for $s \in \tilde \BS$ and $w \in \tW$ if $s w<w$. In other words, $\t \ast w=\t w$ and $s \ast w=\max\{w, sw\}$ for $\t \in \Omega$, $s \in \tilde \BS$ and $w \in \tW$. 

The geometric interpretation of the Demazure product is as follows. For any $w \in \tW$, $\overline{\breve \CI w \breve \CI}=\cup_{w' \le w} \breve \CI w' \breve \CI$ is a closed admissible subset of $\bG$ in the sense of \cite[A.2]{He16}. Then for any $w, w' \in \tW$, we have $$\overline{\breve \CI w \breve \CI} \, \overline{\breve \CI w' \breve \CI}=\overline{\breve \CI (w \ast w') \breve \CI}.$$

Let $w \in \tW$ and $n \in \BN$. The {\it $n$-th $\s$-twisted Demazure power} of $w$ is defined by $$w^{\ast_\s, n}=w \ast \s(w) \ast \cdots \ast \s^{n-1}(w).$$ 

\subsection{Minimal length elements}  For $w, w' \in  \tW$ and $s \in \tilde \BS$, we write $w \xrightarrow{s}_\s w'$ if $w'=s w \s(s)$ and $\ell(w') \le \ell(w)$. We write $w \rightarrow_\s w'$ if there is a sequence $w=w_1, w_2, \dotsc, w_n=w'$ in $\tW$ such that for each $2 \leq k \leq n $ we have $w_{k-1} \xrightarrow{s_k}_\s w_k$ for some $s_k \in \tilde \BS$. We write $w \approx_\sigma w'$ if $w \to_\sigma w'$ and $w' \to_\sigma w$. We write $w \tilde \approx_\s w'$ if $w \approx_\s \t w' \s(\t) \i$ for some $\t \in \Omega$. For any $\s$-conjugacy class $\CO$, we write $\ell(\CO)=\ell(x)$ for any minimal length element $x$ of $\CO$. 

The following result is proved in \cite[Theorem A]{HN2}. 

\begin{theorem}\label{thm:min}
Let $w \in \tW$. Then there exists a minimal length element $w'$ in the same $\s$-conjugacy class of $w$ such that $w \to_\s w'$. 
\end{theorem}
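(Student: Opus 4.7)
The plan is induction on $\ell(w)$. If $w$ is already of minimal length in its $\s$-conjugacy class $\CO$, take $w'=w$. For the inductive step, everything reduces to the following key claim: if $\ell(w)>\ell(\CO)$, then there exist $w_1\in\tW$ with $w\approx_\s w_1$ and a simple reflection $s\in\tilde\BS$ such that $\ell(sw_1\s(s))<\ell(w_1)$. Granted this, the induction hypothesis applied to $sw_1\s(s)\in\CO$ (of length strictly less than $\ell(w)$) produces a minimal length element $w'\in\CO$ with $sw_1\s(s)\to_\s w'$, and concatenating the chains yields $w\to_\s w'$. So the entire content of the theorem is the key claim.

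To prove the key claim, I would first reduce from $\tW$ to the affine Weyl group $W_{\aff}$ via the $\s$-stable decomposition $\tW=W_{\aff}\rtimes\Omega$. Writing $w=x\tau$ with $x\in W_{\aff}$ and $\tau\in\Omega$, set $\delta:=\Ad(\tau)\circ\s$; then $\delta$ is a length-preserving automorphism of $W_{\aff}$ permuting $\tilde\BS$, and $\s$-conjugation of $w$ by $s\in\tilde\BS$ corresponds to $\delta$-conjugation of $x$ by $s$. It thus suffices to prove the claim for $(W_{\aff},\delta)$. For any $\delta$-stable proper subset $K\subsetneq\tilde\BS$, the parabolic $W_K\subset W_{\aff}$ is finite, since a proper subset of an affine Dynkin diagram is of finite type. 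Using a partial-conjugation/cyclic-shift argument on $W_K$-double cosets, the goal is to show that every non-minimal-length $x\in\CO$ either (i) is length-preservingly $\delta$-conjugate to an element of some such $W_K$, in which case the finite Coxeter analogue of the theorem (the $\delta$-twisted Geck-Pfeiffer theorem) immediately yields a length-decreasing simple-reflection $\delta$-conjugation; or (ii) directly admits $s\in\tilde\BS$ with $\ell(sx\delta(s))<\ell(x)$.

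The main obstacle is case (ii), where $x$ has ``full affine support'': no length-preserving $\delta$-conjugate of $x$ lies in any proper $W_K$. In this situation the translation part of $x$ has a non-central Newton point in the closed dominant chamber, and by the bijection of \cite{He14} between straight $\s$-conjugacy classes and $B(\bG)$ there is a $\s$-straight element $y\in\CO$ with $\ell(y)=\ell(\CO)<\ell(x)$. The strategy is to compare alcove walks: any reduced gallery from $\mathfrak a$ to $x\mathfrak a$ has strictly more hyperplane crossings than the straight gallery to $y\mathfrak a$, and the presence of such ``excess'' forces at least one affine wall adjacent to $\mathfrak a$ whose reflection $s$ reduces the length on both sides, so that $sx\delta(s)<x$ in the Bruhat order. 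Making this step precise — identifying the right $s$, verifying that no finer partial-conjugation refinement first moves $x$ into case (i), and handling the interplay between the Bruhat order and the $\delta$-twisted action on $\tilde\BS$ — is where the bulk of the combinatorial work lies, and it is the only step not covered by the inductive framework or by a direct appeal to the finite-Coxeter case.
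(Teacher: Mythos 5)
The theorem you are asked to prove is not proved in this paper at all: it is quoted from \cite{HN2} (Theorem A there), so the relevant comparison is with the He--Nie argument. Your inductive framework and the reduction to the ``key claim'' (every non-minimal-length element admits a length-preserving chain of elementary $\s$-conjugations followed by a length-decreasing one) are exactly the right skeleton, and the reduction from $\tW$ to $(W_{\aff},\delta)$ with $\delta=\Ad(\tau)\circ\s$ is standard and correct. But the proposal has a genuine gap precisely where you say ``the bulk of the combinatorial work lies'': case (ii) is not a residual hard case, it is the main case. An element of $W_{\aff}$ is $\approx_\delta$-equivalent to an element of a proper (hence finite) standard parabolic $W_K$ essentially only when its Newton point vanishes, so all translations by nonzero coweights and, more generally, all elements with $\nu_x\neq 0$ land in case (ii); the finite Geck--Pfeiffer theorem therefore buys you very little, and even the reduction in case (i) (that zero Newton point implies a \emph{length-preserving} $\approx_\delta$-equivalence into some $W_K$, not merely abstract conjugacy) is itself a nontrivial partial-conjugation statement that you assert rather than prove.

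Moreover, the strategy you sketch for case (ii) does not work as stated. Knowing $\ell(x)>\ell(\CO)$, i.e.\ that a reduced gallery for $x$ has ``excess'' crossings over the straight gallery of a straight element $y\in\CO$, does not by itself produce a simple reflection $s$ with $sx\delta(s)<x$; there are elements of non-minimal length in their class for which every simple reflection gives $\ell(sx\delta(s))\geq\ell(x)$, which is exactly why the theorem must allow a preliminary chain of length-preserving conjugations. Identifying that chain is the actual content of \cite{HN2}: one introduces the affine subspace $V_x=\{v: x\cdot v=v+\nu_x\}$ attached to the Newton point and proves, by a convexity/monotonicity argument on the walls separating the base alcove from $V_x$, that one can always $\approx_\delta$-move $x$ so as to strictly decrease either the length or the distance from the base alcove to $V_x$. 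Nothing in your proposal supplies this mechanism, so the key claim --- and hence the theorem --- remains unproved.
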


\section{The generic $\s$-conjugacy class}

In this section, we study the $\s$-conjugacy class $[b_w]$ in more detail. 

\subsection{Via the Bruhat order} Let $w \in \tW$. The generic $\s$-conjugacy class $[b_w]$ in $\breve \CI w \breve \CI$ is first studied by Viehmann in \cite{Vi}. The following result is proved in \cite[Corollary 5.6]{Vi}

\begin{proposition}\label{lem:gen-1}
Let $w \in \tW$. Then the set $$\{[w']; w' \le w\} \subset B(\bG)$$ contains a unique maximal element and this maximal element equals $[b_w]$. 
\end{proposition}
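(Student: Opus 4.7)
The statement has two parts: the upper bound $[w'] \le [b_w]$ for every $w' \le w$, and the membership assertion $[b_w] = [v]$ for some $v \le w$. Together these identify $[b_w]$ as the unique maximum of $\{[w'] : w' \le w\}$. My plan is to establish these two parts in turn.

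For the upper bound, I would invoke Rapoport--Richartz semi-continuity of the Newton map on the affine flag variety. The closure decomposition $\overline{\breve \CI w \breve \CI} = \bigsqcup_{w' \le w} \breve \CI w' \breve \CI$, together with the defining property of $[b_w]$ as the generic $\s$-conjugacy class on $\breve \CI w \breve \CI$, forces $[b] \le [b_w]$ for every $b \in \overline{\breve \CI w \breve \CI}$. Specializing to $b = \dot w'$ gives $[w'] = [\dot w'] \le [b_w]$.

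For the membership assertion, I would induct on $\ell(w)$. If $w$ has minimal length in its $\s$-conjugacy class in $\tW$, standard results on minimal length elements (cf.\ \cite{He14}) imply that the map $\breve \CI w \breve \CI \to B(\bG)$ sending $b \mapsto [b]$ is constant with value $[\dot w]$, whence $[b_w] = [\dot w]$ and one takes $v = w$. Otherwise, Theorem \ref{thm:min} yields $s \in \tilde \BS$ with $sw\s(s) < sw < w$. Setting $w_1 = sw$ and $w_2 = sw\s(s)$, both satisfy $w_i \le w$ and $\ell(w_i) < \ell(w)$. The He--Nie reduction lemma asserts that every $\s$-conjugacy class meeting $\breve \CI w \breve \CI$ also meets $\breve \CI w_1 \breve \CI$ or $\breve \CI w_2 \breve \CI$; applied to $[b_w]$, this gives $[b_w] \le [b_{w_i}]$ for some $i \in \{1,2\}$. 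Combined with $[b_{w_i}] \le [b_w]$ from the upper bound (since $w_i \le w$), we conclude $[b_w] = [b_{w_i}]$, and the inductive hypothesis provides $v \le w_i \le w$ with $[v] = [b_w]$.

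The main technical hurdle is the He--Nie reduction lemma underlying the inductive step, namely that $\s$-conjugation by $\breve \CI s \breve \CI$ sends every $\s$-conjugacy class meeting $\breve \CI w \breve \CI$ to one meeting $\breve \CI w_1 \breve \CI$ or $\breve \CI w_2 \breve \CI$. This rests on a careful analysis via the Iwahori decomposition of $\breve \CI s \breve \CI$ and the behavior of individual elements under conjugation. Once this reduction lemma is granted, the combination of semi-continuity with the inductive tracking of Bruhat comparability finishes the proof routinely.
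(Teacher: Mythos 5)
The paper does not prove this proposition itself; it cites Viehmann \cite[Corollary~5.6]{Vi}, whose argument runs through the theory of ``truncations of level~$1$'' and is quite different in flavor from what you wrote. So you are supplying a proof where the paper gives a citation, and I will judge it on its own merits.

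Your upper-bound half is fine: semi-continuity of the Newton map (Rapoport--Richartz / Grothendieck specialization) applied to the closure decomposition $\overline{\breve\CI w\breve\CI}=\bigsqcup_{w'\le w}\breve\CI w'\breve\CI$ does give $[b]\le[b_w]$ for every $[b]$ meeting $\overline{\breve\CI w\breve\CI}$, hence $[w']\le[b_w]$ for all $w'\le w$. The base case of the membership part is also fine: if $w$ has minimal length in its $\s$-conjugacy class, then every element of $\breve\CI w\breve\CI$ is $\s$-conjugate to $\dot w$ (this is \cite[Theorem~3.5]{He14}).

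The gap is in the inductive step. You assert that Theorem~\ref{thm:min} yields a simple reflection $s$ with $sw\s(s)<sw<w$. It does not. Theorem~\ref{thm:min} produces a chain $w=w_1\to_\s w_2\to_\s\cdots$ terminating at a minimal-length element, but the first step of such a chain can be length-preserving, i.e.\ $\ell(sw\s(s))=\ell(w)$; in general (already for finite Coxeter groups, as in Geck--Pfeiffer) one must first conjugate through a sequence of length-preserving steps before a length strictly drops. The paper is careful about exactly this point in \S\ref{sec:alg}: it finds $w'\tilde\approx_\s w$ (so $\ell(w')=\ell(w)$) and $s$ with $sw'\s(s)<w'$, and then invokes \cite[Proposition~2.4]{He16} to transfer the relevant partial-order statement between $w$ and $w'$. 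Your argument has no such transfer mechanism, and it is not automatic: $w'\approx_\s w$ gives no Bruhat comparison between $w$ and $w'$, so the element $v\le w'_i<w'$ produced by induction need not satisfy $v\le w$, which is what the membership assertion requires. To repair the proof you would need either (i) to first reduce to a conjugate $w'\approx_\s w$ admitting a direct descent and then prove a lemma of the form ``$[b_w]=[b_{w'}]$ and the set $\{[v]:v\le w\}$ is unchanged when $w$ is replaced by $w'$'' (this is essentially the content of \cite[Proposition~2.4]{He16}), or (ii) to run a different induction, e.g.\ on the quantity $\ell(w)-\ell(\CO_w)$, carrying the extra bookkeeping explicitly.

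One smaller remark: the reduction lemma you invoke (that when $sw\s(s)<sw<w$, every $\s$-conjugacy class meeting $\breve\CI w\breve\CI$ also meets $\breve\CI sw\breve\CI$ or $\breve\CI sw\s(s)\breve\CI$) is correct and is the standard affine Deligne--Lusztig reduction, but you should cite it (e.g.\ \cite[Lemma~3.1]{He14} or its antecedents) rather than gesturing at ``Iwahori decomposition of $\breve\CI s\breve\CI$''; the proof is not long but it is not trivial either.
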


A more explicit description of the generic Newton point $\nu_{b_w}$ is obtained by Mili\'cevi\'c \cite[Theorem 3.2]{Mi} for split group $\bG$ and sufficiently large $w$. 

\subsection{Via the partial order on $B(\tW, \s)_{\text{str}}$} Let $w \in \tW$ and $\CO \in B(\tW, \s)_{\text{str}}$. We write $\CO \preceq_\s w$ if there exists a minimal length element $w' \in \CO$ such that $w' \le w$ with respect to the Bruhat order $\le$ of $\tW$. Let $\CO, \CO' \in B(\tW, \s)_{\text{str}}$. We write $\CO' \preceq_\s \CO$ if $\CO' \preceq_\s w$ for some minimal length element $w$ of $\CO$. By \cite[\S 3.2]{He16}, if $\CO' \preceq_\s \CO$, then $\CO' \preceq_\s w'$ for any minimal length element $w'$ of $\CO$. Hence $\preceq_\s$ is a partial order on $B(\tW, \s)_{\text{str}}$. 

It is proved in \cite[Theorem B]{He16} that 

\begin{theorem}\label{thm:two-posets}
The partial order $\preceq_\s$ on $B(\tW, \s)_{\text{str}}$ coincides with the partial order $\le$ on $B(\bG)$ via the bijection map $\Psi: B(\tW, \s)_{\text{str}} \to B(\bG)$. 
\end{theorem}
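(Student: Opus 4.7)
The plan is to prove both implications of the equivalence $\CO' \preceq_\s \CO \Leftrightarrow \Psi(\CO') \le \Psi(\CO)$. A preparatory identification will be crucial: for any minimal length element $w$ of a straight class $\CO$ (such $w$ being automatically $\s$-straight), the entire Iwahori double coset $\breve \CI w \breve \CI$ lies in the single $\s$-conjugacy class $[\dot w] = \Psi(\CO)$, and in particular $[b_w] = \Psi(\CO)$. This ``no splitting'' identification follows from the general fact that the map $\breve \CI w \breve \CI \to B(\bG)$ is constant whenever $w$ is of minimal length in its $\s$-conjugacy class, itself a consequence of the reductions $\xrightarrow{s}_\s$ combined with the length-additive product formula $\breve \CI \dot s \breve \CI \cdot \breve \CI \dot w \breve \CI = \breve \CI \dot s \dot w \breve \CI$ valid when $sw > w$. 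With this in hand, one direction is immediate: if $\CO' \preceq_\s \CO$, pick minimal length elements $w' \in \CO'$ and $w \in \CO$ with $w' \le w$ in the Bruhat order; then Proposition~\ref{lem:gen-1} gives $[\dot w'] \le [b_w]$, whence $\Psi(\CO') = [\dot w'] \le [b_w] = \Psi(\CO)$.

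For the converse I would prove the stronger statement that for every $w \in \tW$,
$$\{\Psi(\CO') : \CO' \in B(\tW, \s)_{\text{str}},\ \CO' \preceq_\s w\} = \{[b] \in B(\bG) : [b] \le [b_w]\},$$
from which Theorem~\ref{thm:two-posets} follows by specializing to $w$ a minimal length element of $\CO$. The inclusion $\subseteq$ is exactly the easy direction above. For $\supseteq$, I would induct on $\ell(w)$: if $w$ is not of minimal length in its $\s$-conjugacy class, Theorem~\ref{thm:min} furnishes $s \in \tilde \BS$ with $w_1 = s w \s(s)$ and $\ell(w_1) < \ell(w)$. One then verifies $[b_{w_1}] = [b_w]$ and the compatibility $\{\CO' \in B(\tW, \s)_{\text{str}} : \CO' \preceq_\s w\} = \{\CO' : \CO' \preceq_\s w_1\}$, so the inductive hypothesis applied to $w_1$ closes the step.

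The hard part will be the base case, where $w$ is of minimal length in its $\s$-conjugacy class but that class is not itself straight. Here the strategy is to invoke a structure theorem for such minimal length elements: $w$ is $\tilde \approx_\s$-equivalent to an element supported in a proper $\s$-stable standard Levi subgroup $\bM \subsetneq \bG$, and a Hodge--Newton type decomposition relates $[b_w]$ and the $B(\bG)$-classes lying below it to the analogous structure for $\bM$. An induction on the semisimple rank of $\bG$ then reduces the remaining cases to the straight case, which is tautological. The verification of the cyclic-shift compatibility $\{\CO' \preceq_\s w\} = \{\CO' \preceq_\s w_1\}$ in the length-reducing induction step, which amounts to tracking how Bruhat-lower elements transform under the move $w \mapsto s w \s(s)$, is delicate but follows from the length-additive factorization of the relevant Iwahori double cosets.
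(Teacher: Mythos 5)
The paper does not prove this statement; it is quoted from [He16, Theorem~B], so there is no internal proof here to compare against. Evaluating your sketch on its merits: the easy direction is correct. The preparatory identification $[b_w]=[\dot w]=\Psi(\CO)$ for $w$ of minimal length in a straight class $\CO$ (using that the whole double coset $\breve\CI w\breve\CI$ sits in a single $\s$-conjugacy class) is exactly the right input, and combining it with Proposition~\ref{lem:gen-1} closes that direction. The inductive framework for the converse, reformulated as the set equality $\{\Psi(\CO'):\CO'\preceq_\s w\}=\{[b]:[b]\le[b_w]\}$, is also the right shape and parallels the algorithm in~\S\ref{sec:alg}. But there is a mechanical slip: Theorem~\ref{thm:min} does not directly produce a simple reflection $s$ with $\ell(sw\s(s))<\ell(w)$, since the chain $w\to_\s w_{\min}$ may begin with length-preserving cyclic shifts. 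You must first pass to $w'\approx_\s w$ and then shorten; moreover the correct element for the inductive step is $sw'$ (with $[b_w]=[b_{w'}]=[b_{sw'}]$, the last equality via Deligne--Lusztig reduction combined with Proposition~\ref{lem:gen-1}), not $sw\s(s)$.

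The serious gap is the base case. When $w$ is of minimal length and its $\s$-conjugacy class is not straight, proving that every $[b]\le[\dot w]$ equals $\Psi(\CO')$ for some $\CO'\preceq_\s w$ is precisely the hard content of [He16, Theorem~B], and the Hodge--Newton reduction you invoke does not obviously deliver it. If $[b']<[\dot w]$ then $\nu_{b'}<\nu_{b_w}$, so the centralizer of $\nu_{b'}$ is typically \emph{larger} than the Levi $\bM$ to which $w$ would be pushed; such $[b']$ need not be $\bM$-dominant-regular and admits no Hodge--Newton reduction to $\bM$, so an induction on semisimple rank through $\bM$ cannot account for it. The proof in [He16] instead establishes a saturation property of the Newton stratification of $\overline{\breve\CI w\breve\CI}$, through the combinatorics of admissible sets and the Kottwitz--Rapoport conjecture; that machinery is what your sketch is missing.
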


Now we show that 

\begin{proposition}\label{prop:gen-2}
Let $w \in \tW$. Then the set $\{\CO \in B(\tW, \s)_{\text{str}}; \CO \preceq_\s w\}$ contains a unique element $\CO_w$ with respect to the partial order $\preceq_\s$. Moreover, $\Psi(\CO_w)=[b_w]$. 
\end{proposition}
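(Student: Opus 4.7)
The plan is to prove existence and uniqueness of the maximum separately; uniqueness follows formally from Proposition \ref{lem:gen-1} and Theorem \ref{thm:two-posets}, while existence is the substantive step.

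For uniqueness, set $\CO_w := \Psi^{-1}([b_w])$. If $\CO \in B(\tW,\s)_{\text{str}}$ satisfies $\CO \preceq_\s w$, choose a minimal length (hence $\s$-straight) $w' \in \CO$ with $w' \le w$. Then $\Psi(\CO) = [w']$ belongs to $\{[v] : v \le w\} \subset B(\bG)$, whose unique maximum is $[b_w]$ by Proposition \ref{lem:gen-1}. Hence $\Psi(\CO) \le [b_w] = \Psi(\CO_w)$, and Theorem \ref{thm:two-posets} translates this back to $\CO \preceq_\s \CO_w$. Provided $\CO_w \preceq_\s w$ itself, this identifies $\CO_w$ as the unique maximum and confirms $\Psi(\CO_w) = [b_w]$.

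For existence of $\CO_w \preceq_\s w$, I would apply Proposition \ref{lem:gen-1} to pick $w_0 \le w$ with $[w_0] = [b_w]$ in $B(\bG)$, taking $\ell(w_0)$ minimum among the set $S := \{w' \le w : [w'] = [b_w]\}$. Theorem \ref{thm:min} produces a chain $w_0 = u_0 \to_\s u_1 \to_\s \cdots \to_\s u_m$ ending at a minimum length element $u_m$ of the $\tW$-$\s$-conjugacy class $[w_0]_{\tW}$; each step $u_i \xrightarrow{s}_\s u_{i+1} = s u_i \s(s)$ is either length-preserving or drops the length by $2$. In the length-reducing case, both $s u_i < u_i$ and $(s u_i)\s(s) < s u_i$, so $u_{i+1} < s u_i < u_i$ in Bruhat order; inductively, any such reducing step therefore produces $u_{i+1} \le w$ with $[u_{i+1}] = [b_w]$, contradicting minimality of $\ell(w_0)$ in $S$. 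Hence $w_0$ is already of minimum length in $[w_0]_{\tW}$, and at this point I would invoke the structural results of \cite{He14, HN2} describing minimum length elements in a $\tW$-$\s$-class whose image under the classification map is a straight class, in order to conclude $w_0 \in \CO_w$.

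The main obstacle is exactly this last step: ``minimum length in its $\tW$-$\s$-conjugacy class'' does not automatically imply ``$\s$-straight'', and the $\tW$-$\s$-conjugacy class of $w_0$ need not coincide with $\CO_w$ even though both map to $[b_w]$ under $\Psi$. The cleaner alternative, which sidesteps the reduction entirely, is to apply the non-emptiness criterion
$$X_w(b) \neq \emptyset \;\Longleftrightarrow\; \Psi^{-1}([b]) \preceq_\s w$$
that is implicit in \cite{He14, He16}: since $[b_w] \in B(\bG)_w$ by the very definition of $[b_w]$, taking $[b] = [b_w]$ yields $\CO_w \preceq_\s w$ immediately, completing the existence step and hence the proposition.
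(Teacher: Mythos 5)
Your treatment of uniqueness is sound and is essentially the paper's argument: given that $\CO_w := \Psi^{-1}([b_w])$ belongs to the set, Proposition \ref{lem:gen-1} together with Theorem \ref{thm:two-posets} forces every $\CO \preceq_\s w$ to satisfy $\CO \preceq_\s \CO_w$. The issue is entirely in establishing $\CO_w \preceq_\s w$.

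Your first attempt at existence has the flaw you yourself flag, and in fact it breaks a step earlier than you say: the chain $w_0 = u_0 \to_\s \cdots \to_\s u_m$ from Theorem \ref{thm:min} begins with length-preserving twisted conjugations, which need not preserve the condition $u_i \le w$, so even before you reach a length-reducing step you have lost the constraint you want to contradict. Your ``cleaner alternative'' has a different problem: the asserted equivalence $X_w(b)\neq\emptyset \Longleftrightarrow \Psi^{-1}([b]) \preceq_\s w$ is not a theorem. The forward implication is true, but the backward implication would settle the non-emptiness pattern of affine Deligne--Lusztig varieties at Iwahori level in full generality, which is not available and is certainly not ``implicit'' in \cite{He14,He16}. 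So the lemma you lean on is over-claimed; only one direction can be invoked, and you would still need to justify that direction.

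What the paper actually uses, and what your argument is missing, is the set-theoretic identity from \cite[\S 2.7]{He16}: $\bigcup_{w'\le w}[w'] = \bigcup_{\CO \preceq_\s w}\Psi(\CO)$ as subsets of $\breve G$, which immediately gives the equality of sets of $\s$-conjugacy classes $\{[w'] ; w'\le w\} = \{\Psi(\CO) ; \CO \preceq_\s w\}$. Combined with Proposition \ref{lem:gen-1}, which places $[b_w]$ in the left-hand set as its unique maximum, this puts $[b_w]$ in the right-hand set, i.e.\ $\CO_w := \Psi^{-1}([b_w]) \preceq_\s w$. That is the entire existence step; everything else follows as you argued. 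In short: replace the appeal to a non-emptiness criterion (which in the needed direction would anyway be proved by unwinding to exactly these two cited facts) with the direct citation of \cite[\S 2.7]{He16}, and the proof closes.
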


\begin{proof}
By \cite[\S 2.7]{He16}, $\cup_{w'\le w} [w']=\cup_{\CO \preceq_\s w} \Psi(\CO)$. By Proposition \ref{lem:gen-1}, the set $$\{[w']; w' \le w\}=\{\Psi(\CO) \in B(\tW, \s)_{\text{str}}; \CO \preceq_\s w\}$$ contains a unique maximal element, which is $[b_w]$. Let $\CO_w=\Psi \i([b_w])$. By Theorem \ref{thm:two-posets}, $\CO_w$ is the unique maximal element of $\{\CO \in B(\tW, \s)_{\text{str}}; \CO \preceq_\s w\}$. 
\end{proof}

\subsection{An algorithm}\label{sec:alg} We provide an algorithm to compute $\CO_w$. We argue by induction on $\ell(w)$. By \cite[\S 2.7]{He16}, if $w$ is a minimal length element in its $\s$-conjugacy class, then $\CO_w$ is the straight $\s$-conjugacy class of $\tW$ that corresponds to $[w] \in B(\bG)$. 

If $w$ is not a minimal length element in its $\s$-conjugacy class, then by Theorem \ref{thm:min}, there exists $w' \in \tW$ and a simple reflection $s$ such that $w' \approx_\s w$ and $s w' \s(s)<w'$. By \cite[Proposition 2.4]{He16}, $\CO \preceq_\s w$ if and only if $\CO \preceq_\s w'$. Let $\CO$ be a straight $\s$-conjugacy class with $\CO \preceq_\s w'$. Then there exists a minimal length element $w_1$ of $\CO$ with $w_1 \le w'$. If $s w_1>w_1$, then $w_1 \le \min\{w', s w'\}=s w'$. If $s w_1<w_1$, then $\ell(s w_1 \s(s)) \le \ell(s w_1)+1=\ell(w_1)$ and $s w_1 \s(s)$ is also a minimal length element in $\CO$. Moreover, we have $s w_1 \le s w'$ and $s w_1 \s(s) \le \max\{s w', s w' \s(s)\}=s w'$. In either case, $\CO \preceq_\s {s w'}$. By inductive hypothesis on $s w'$, $\CO_w=\CO_{w'}=\CO_{s w'}$ is the unique maximal element in $\{\CO; \CO \preceq_\s w\}=\{\CO; \CO \preceq_\s s w'\}$. 

\subsection{Via the $0$-Hecke algebras} Let $H_0$ be the {\it $0$-Hecke algebra} of $\tW$. It is a $\mathbb C$-algebra generated by $\{t_w; w \in \tW\}$ subject to the relations 
\begin{itemize}
    \item $t_w t_{w'}=t_{w w'}$ for any $w, w' \in \tW$ with $\ell(w w')=\ell(w)+\ell(w')$. 
    
    \item $t_s^2=-t_s$ for any $s \in \tilde \BS$. 
\end{itemize}

The automorphism $\s$ on $\tW$ induces a natural algebra homomorphism on $H_0$, which we still denote by $\s$. For any $h, h' \in H_0$, the $\s$-commutator of $h$ and $h'$ is defined by $[h, h']_\s=h h'-h'\s(h)$. The {\it $\s$-commutator} $[H_0, H_0]_\s$ of $H_0$ is by definition the subspace of $H_0$ spanned by $[h, h']_\s$ for all $h, h' \in H_0$. The $\s$-cocenter of $H_0$ is defined to be $\bar H_{0 , \s}=H_0/[H_0, H_0]_\s$. 

Let $\tW_{\s, \min}$ be the set of elements in $\tW$ which are of minimal length in their $\s$-conjugacy classes. It is easy to see that if $w \in \tW_{\s, \min}$ and $w' \tilde \approx_\s w$, then $w' \in \tW_{\s, \min}$. Let $\tW_{\s, \min}/\tilde \approx_\s$ be the set of $\tilde \approx_\s$-equivalence classes in $\tW_{\s, \min}$. 

By \cite[Proposition 2.1]{He15}, if $w \tilde \approx_\s w'$, then $t_w$ and $t_{w'}$ have the same image in $\bar H_{0, \s}$. For any $\Sigma \in \tW_{\s, \min}/\tilde \approx_\s$, we write $t_{\Sigma}$ for the image of $t_w$ in $\bar H_{0, \s}$ for any $w \in \Sigma$. 

We have the following result. 

\begin{proposition}
(1) The set $\{t_{\Sigma}\}_{\Sigma \in \tW_{\s, \min}/\tilde \approx_\s}$ is a $\mathbb C$-basis of $\bar H_{0, \s}$. 

(2) For any $w \in \tW$, there exists a unique $\Sigma_w \in \tW_{\s, \min}/\tilde \approx_\s$ such that the image of $t_w$ in $\bar H_{0, \s}$ equals $\pm t_{\Sigma_w}$. 
\end{proposition}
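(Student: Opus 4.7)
I plan to prove the existence half of (2) by induction on $\ell(w)$, derive spanning in (1) as an immediate corollary, and then tackle linear independence, which both completes (1) and forces the uniqueness claim in (2). The linear-independence step is the one I expect to be the main obstacle.

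\textbf{Induction for (2).} Let $w \in \tW$. If $w \in \tW_{\s,\min}$, take $\Sigma_w$ to be its own $\tilde\approx_\s$-class with sign $+$. Otherwise, by Theorem~\ref{thm:min} there is a $w_1$ in the same $\s$-conjugacy class with $w \to_\s w_1$ and $\ell(w_1) < \ell(w)$; it suffices to handle a single length-reducing step, so I may assume $w_1 \approx_\s w$ and some $s \in \tilde\BS$ satisfy $sw_1\s(s) < w_1$. A length-parity argument (the left and right multiplications by $s, \s(s)$ each change length by $\pm 1$) forces $sw_1 < w_1$ and then further $sw_1\s(s) < sw_1$. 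By \cite[Proposition 2.1]{He15}, $t_w \equiv t_{w_1} \pmod{[H_0,H_0]_\s}$, and the identity
\[
[t_s, t_{sw_1}]_\s \;=\; t_s t_{sw_1} - t_{sw_1}\s(t_s) \;=\; t_{w_1} - t_{sw_1}t_{\s(s)} \;=\; t_{w_1} + t_{sw_1},
\]
where $t_{sw_1}t_{\s(s)} = -t_{sw_1}$ follows from $sw_1\s(s) < sw_1$ together with $t_{\s(s)}^2 = -t_{\s(s)}$, gives $t_{w_1} \equiv -t_{sw_1} \pmod{[H_0, H_0]_\s}$. Since $\ell(sw_1) < \ell(w)$, the inductive hypothesis applied to $sw_1$ produces $\Sigma_w := \Sigma_{sw_1}$ with the sign flipped.

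\textbf{Spanning in (1).} Since $\{t_w\}_{w \in \tW}$ is a $\mathbb{C}$-basis of $H_0$, applying the existence in (2) to every basis vector shows that $\{t_\Sigma\}_{\Sigma \in \tW_{\s,\min}/\tilde\approx_\s}$ spans $\bar H_{0,\s}$.

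\textbf{Linear independence and uniqueness.} For each $\Sigma$ I would construct a linear functional $\chi_\Sigma \colon \bar H_{0,\s} \to \mathbb{C}$ with $\chi_\Sigma(t_{\Sigma'}) = \delta_{\Sigma,\Sigma'}$, defined first on $H_0$ by $\chi_\Sigma(t_w) = (-1)^{\ell(w) - \ell(\Sigma)}$ when the rewriting procedure above carries $w$ into $\Sigma$, and $\chi_\Sigma(t_w) = 0$ otherwise. Here $\ell(\Sigma)$ is the common length of elements of $\Sigma$, which is well-defined because both $\approx_\s$ and conjugation by $\Omega$ preserve length. The main task — and the principal obstacle — is to verify that $\chi_\Sigma$ descends to $\bar H_{0,\s}$, i.e.\ vanishes on every $\s$-commutator $[t_a, t_b]_\s$. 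By linearity this reduces to the basis commutators, and then to the case $a = s \in \tilde\BS$; in that setting the vanishing amounts to path-independence of the rewriting procedure of Step 1 under different choices of simple reflection at each stage. I would handle this by an inner induction on $\ell(w)$ combined with the braid-style compatibility of $\xrightarrow{s}_\s$-moves on minimal-length $\s$-conjugacy representatives, a feature developed in \cite{He07} and invoked throughout \cite{He15}. Once $\chi_\Sigma$ is known to descend, linear independence is immediate, and uniqueness in (2) follows, since any two competing $\Sigma_w, \Sigma'_w$ would yield a non-trivial relation $t_{\Sigma_w} = \pm t_{\Sigma'_w}$ in $\bar H_{0,\s}$.
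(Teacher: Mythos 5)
The paper itself does not spell out a proof: it cites the finite Weyl group case from \cite[Propositions 5.1 and 6.2]{He15} and asserts that the same argument extends to Iwahori-Weyl groups. Your existence-and-spanning argument (the induction on $\ell(w)$, the identity $t_{w'} = t_s t_{sw'} \equiv t_{sw'}t_{\s(s)} = -t_{sw'}$, and the length-parity step) is correct and is exactly the rewriting scheme the paper itself uses in the proof of Proposition \ref{prop:gen-3}; that part of your argument is fine.

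The gap is in the linear-independence step, and it is not a small one. Your definition of $\chi_\Sigma$ on $H_0$ reads ``$\chi_\Sigma(t_w) = (-1)^{\ell(w)-\ell(\Sigma)}$ when the rewriting procedure carries $w$ into $\Sigma$, and $0$ otherwise.'' But a priori the rewriting procedure is a sequence of choices (which simple reflection $s$ to use at each stage, which minimal-length representative $w'$ to pass to), and there is nothing yet that says different choices land in the same class $\Sigma$. So the condition ``carries $w$ into $\Sigma$'' is not a well-defined predicate on $w$ until you have already established a form of the uniqueness in (2). Put differently, the well-definedness of $\chi_\Sigma$ as a linear functional on $H_0$ presupposes precisely the path-independence that descent to $\bar H_{0,\s}$ and then linear independence are supposed to deliver. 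You flag this yourself as ``the principal obstacle,'' but the proposed resolution --- an inner induction on $\ell(w)$ plus ``braid-style compatibility'' of $\xrightarrow{s}_\s$-moves, citing a reference \cite{He07} that this paper does not include --- is only a gesture, not an argument. This is where the entire weight of the proposition lies (existence and spanning are formal consequences of the rewriting scheme; the content is that the rewriting outcome is well-defined and the images are independent), so what you have is an outline of how one might begin, not a proof. To repair it one would either need to establish path-independence of the rewriting first as a standalone combinatorial lemma (after which the $\chi_\Sigma$-argument becomes a routine consistency check), or give an independent proof of linear independence, for instance via a length filtration on $H_0$ that controls the commutator subspace; the latter is essentially what \cite{He15} does.
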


This result is proved for $0$-Hecke algebras of finite Weyl groups in \cite[Proposition 5.1 \& Proposition 6.2]{He15}. The proof for the $0$-Hecke algebras of Iwahori-Weyl groups is the same. 

Now we give another description of $[b_w]$. 

\begin{proposition}\label{prop:gen-3}
Let $w \in \tW$. Then $[b_w]=\Psi(\Sigma_w)$. 
\end{proposition}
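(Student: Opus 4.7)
Since Proposition~\ref{prop:gen-2} gives $[b_w]=\Psi(\CO_w)$, the task reduces to showing $\Psi(\Sigma_w)=\Psi(\CO_w)$, where we read $\Psi(\Sigma_w)$ through the natural extension of $\Psi$ from $B(\tW,\s)_{\text{str}}$ to all of $B(\tW,\s)$ (sending a $\s$-conjugacy class $[x]$ of $\tW$ to $[\dot x']\in B(\bG)$ for any minimal length representative $x'\in[x]$; this is well defined by \cite{He14}). I would argue by induction on $\ell(w)$, running the algorithm of Section~\ref{sec:alg} for $\CO_w$ in parallel with a reduction of $t_w$ in the $\s$-cocenter $\bar H_{0,\s}$ that computes $\Sigma_w$, and verifying that at each step the two procedures perform the same reduction $w\rightsquigarrow sw'$.

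\textbf{Base case.} Assume $w$ is of minimal length in its $\s$-conjugacy class in $\tW$. Then $\breve\CI w\breve\CI$ is contained in a single $\s$-conjugacy class $[\dot w]\in B(\bG)$ (the standard ``reduction to minimal length'' property, cf.\ \cite{He14}), hence $[b_w]=[\dot w]$; by the algorithm of Section~\ref{sec:alg}, $\CO_w=\Psi\i([\dot w])$. On the cocenter side, $t_w$ is already a basis element of $\bar H_{0,\s}$, so $\Sigma_w$ is the $\tilde\approx_\s$-class of $w$, a subset of $[w]\subset\tW$. Applying the extended $\Psi$ sends this to $[\dot w]$, matching $[b_w]$.

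\textbf{Inductive step.} Assume $w$ is not minimal length. By Theorem~\ref{thm:min} one can choose $w'\in\tW$ and $s\in\tilde\BS$ with $w'\approx_\s w$ and $sw'\s(s)<w'$; a short parity argument shows that this forces both $sw'<w'$ and $sw'\s(s)<sw'$. The analysis of Section~\ref{sec:alg} then gives $\CO_w=\CO_{sw'}$, so $[b_w]=[b_{sw'}]$. On the cocenter side we compute
\begin{equation*}
t_w\equiv t_{w'} \;=\; t_s\,t_{sw'} \;\equiv\; t_{sw'}\,t_{\s(s)} \;=\; t_{sw'\s(s)}\,t_{\s(s)}^{2} \;=\; -\,t_{sw'\s(s)}\,t_{\s(s)} \;=\; -\,t_{sw'} \pmod{[H_0,H_0]_\s},
\end{equation*}
where the first congruence uses that images in $\bar H_{0,\s}$ are well defined on $\tilde\approx_\s$-classes \cite[Prop.~2.1]{He15}, the third step is the $\s$-commutator relation $hh'\equiv h'\s(h)$, the fifth uses $t_{\s(s)}^{2}=-t_{\s(s)}$, and the remaining equalities are instances of the braid-free Hecke relation $t_x=t_yt_z$ valid whenever $\ell(x)=\ell(y)+\ell(z)$. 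This identifies $\Sigma_w=\Sigma_{sw'}$. By the induction hypothesis applied to $sw'$ (of strictly smaller length), $\Psi(\Sigma_{sw'})=[b_{sw'}]$, giving $\Psi(\Sigma_w)=\Psi(\Sigma_{sw'})=[b_{sw'}]=[b_w]$.

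\textbf{Main obstacle.} The proof is not conceptually difficult; the only genuine care required is in the Bruhat bookkeeping for the displayed cocenter identity --- verifying that both intermediate applications of the Hecke product relation are legal --- together with fixing the interpretation of $\Psi$ on possibly non-straight $\s$-conjugacy classes of $\tW$ via the extension above. Once these are pinned down, the two algorithms run in lockstep and the induction closes immediately.
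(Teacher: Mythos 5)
Your argument is correct and follows essentially the same route as the paper's proof: the same induction on $\ell(w)$, the same reduction via Theorem~\ref{thm:min} to a pair $(w',s)$ with $w'\approx_\s w$ and $sw'\s(s)<w'$, the same cocenter identity $t_w\equiv -t_{sw'}$ modulo $[H_0,H_0]_\s$, and the same parallel appeal to the algorithm of \S\ref{sec:alg} giving $\CO_w=\CO_{sw'}$ (hence $[b_w]=[b_{sw'}]$). The extra material you supply --- the parity argument that $sw'\s(s)<w'$ forces $sw'<w'$ and $sw'\s(s)<sw'$, the fully expanded Hecke computation, and the explicit extension of $\Psi$ to $\tW_{\s,\min}/\tilde\approx_\s$ via a minimal-length representative --- is all accurate; the paper leaves these implicit.
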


\begin{remark}
It is worth pointing out that in general, $\Sigma_w$ is different from $\CO_w$ in Proposition \ref{prop:gen-2}. 
\end{remark}

\begin{proof}
The argument is similar to \S\ref{sec:alg}. We argue by induction on $\ell(w)$. By \cite[\S 2.7]{He16}, if $w$ is a minimal length element in its $\s$-conjugacy class and $t_{\Sigma_w}$ is the $\tilde \approx_\s$-equivalence class of $w$, then $\Psi(\Sigma_w)=[w]=[b_w]$. 

If $w$ is not a minimal length element in its $\s$-conjugacy class, then by Theorem \ref{thm:min}, there exists $w' \in \tW$ and a simple reflection $s$ such that $w' \approx_\s w$ and $s w' \s(s)<w'$. We have $$t_w \equiv t_{w'}=t_s t_{s w'} \equiv t_{s w'} t_{\s(s)}=-t_{s w'} \mod [H_0, H_0]_\s.$$ Therefore $\Sigma_w=\Sigma_{w'}=\Sigma_{s w'}$. By \S\ref{sec:alg}, $\CO_w=\CO_{w'}=\CO_{s w'}$. Now the statement follows from induction hypothesis on $s w'$.  
\end{proof}

\section{Passing from non-archimedean local fields to $\mathbb C((\e))$}

\subsection{Dimension formula} The following result follows from \cite[Theorem 2.23]{He-CDM}. 

\begin{lemma}
Let $w \in \tW$. Then $$\dim X_w(b_w)=\ell(w)-\ell(\CO_w).$$ 
\end{lemma}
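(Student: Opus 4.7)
The plan is to invoke \cite[Theorem 2.23]{He-CDM} directly, using the identifications established in Section 3 to translate its output into the present notation.

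Concretely, I would first recall from Proposition \ref{prop:gen-3} (equivalently, from Proposition \ref{prop:gen-2} combined with the algorithm of Section \ref{sec:alg}) the identification $[b_w] = \Psi(\CO_w)$, where $\CO_w$ is the unique maximal straight $\s$-conjugacy class satisfying $\CO_w \preceq_\s w$. By the definition of $\ell(\CO_w)$, any minimal length representative $w' \in \CO_w$ has $\ell(w') = \ell(\CO_w)$, and such a $w'$ may be chosen with $w' \le w$ in the Bruhat order.

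I would then apply \cite[Theorem 2.23]{He-CDM}, which expresses $\dim X_w([b])$, for any $[b] \in B(\bG)_w$, in terms of $\ell(w)$ together with the combinatorics attached to the straight class $\Psi^{-1}([b]) \in B(\tW, \s)_{\text{str}}$. Specialising to $[b] = [b_w]$ so that $\Psi^{-1}([b_w]) = \CO_w$, the formula collapses to $\ell(w) - \ell(\CO_w)$, which is the desired identity.

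The only non-cosmetic issue is checking that the cited general formula indeed reduces to $\ell(w) - \ell(\CO_w)$ at the generic Newton point; this is essentially the statement that the ``defect''-type correction terms in the general formula vanish for $[b_w]$ precisely because $\CO_w$ is, by construction, the top straight class below $w$. If one preferred a self-contained proof, the natural route is induction on $\ell(w)$ using Theorem \ref{thm:min} together with the Deligne-Lusztig reduction of \cite{HN2}: in the non-minimal case one picks $w'' \approx_\s w$ and $s \in \tilde \BS$ with $s w'' \s(s) < w''$, and the algorithm of Section \ref{sec:alg} guarantees $\CO_w = \CO_{w''} = \CO_{s w'' \s(s)}$, which closes the induction; the base case is when $w$ is $\s$-straight, where $\dim X_w(b_w) = 0$ follows from standard properties of affine Deligne-Lusztig varieties attached to straight elements. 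The main subtlety in this alternative route is matching the length drop on the $w$-side with the dimension drop on the variety side, together with the fact that $[b_w]$ itself depends on $w$.
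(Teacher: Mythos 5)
Your primary route matches the paper exactly: the paper's entire proof of this lemma is the one-line citation ``follows from [He-CDM, Theorem~2.23],'' and that is precisely what you propose. The paper does not spell out the reduction you flag as ``the only non-cosmetic issue'' (that the general dimension formula from He-CDM specializes to $\ell(w)-\ell(\CO_w)$ at the generic class, which ultimately rests on $\ell(\CO_w)=\langle\bar\nu_{b_w},2\rho\rangle$ for the straight representative, rather than on a ``defect vanishing'' -- the defect of $b_w$ need not be zero), so your hesitation there is reasonable but not something the paper itself addresses. One small inaccuracy: you attribute $[b_w]=\Psi(\CO_w)$ to Proposition~\ref{prop:gen-3}, but that proposition gives $[b_w]=\Psi(\Sigma_w)$ with $\Sigma_w$ a priori different from $\CO_w$; the identity you want is Proposition~\ref{prop:gen-2}. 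The alternative induction via Theorem~\ref{thm:min} and Deligne--Lusztig reduction that you sketch is a legitimate second route, though to close it one also needs the upper bound $\dim X_{sw''\s(s)}(b_w)\le \ell(sw''\s(s))-\langle\bar\nu_{b_w},2\rho\rangle$ to see that the length-$(\ell(w)-1)$ term dominates the length-$(\ell(w)-2)$ term in the reduction; the paper does not take this route.
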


Note that the definition of $\CO_w$ only depends on the triple $(\tW, \s, w)$ and is independent of the reductive $\bG$ over $F$. This allows us to reduce the calculation of the dimension of affine Deligne-Lusztig varieties to the calculation of the length of certain elements in the Iwahori-Weyl group. And the latter problem will be translated to a problem on reductive groups over $\mathbb C((\e))$. This is what we will do in this section. 

\subsection{Reduction to $W_{\aff}$}\label{sec:reduction}
Let $w=x \t$ for $x \in W_{\aff}$ and $\t \in \Omega$. We write $\th=\Ad(\t) \circ \s \in \Aut(W_\aff)$. Define the map $$\iota: W_\aff \to \tW, \quad x' \mapsto x' \t.$$ For any $x' \in W_\aff$ and $n \in \BN$, we have $\ell(\iota(x')^{\s, n})=\ell((x')^{\th, n})$. Thus $x'$ is $\th$-straight if and only if $\iota(x')$ is $\s$-straight. It is also easy to see that if $x_1, x_2$ are in the same $\th$-conjugacy class of $W_\aff$, then $\iota(x_1), \iota(x_2)$ are in the same $\s$-conjugacy class of $\tW$. The map $\iota$ induces a map $B(W_\aff, \th)_{\text{str}} \to B(\tW, \s)_{\text{str}}$, which we still denote by $\iota$. 

By Proposition \ref{prop:gen-2}, there exists a unique maximal element $\CO_x$ in $$\{\CO' \in B(W_\aff, \th)_{\text{str}}; \CO' \preceq_\th x\}.$$ By definition, $\iota(\CO_x) \preceq_\s \iota(x)=w$ and it is a maximal element in $$\{\CO \in B(\tW, \s)_{\text{str}}; \CO \preceq_\s w\}.$$ Thus $\iota(\CO_x)=\CO_w$. We have $\ell(w)-\ell(\CO_w)=\ell(x)-\ell(\CO_x)$. 

Thus to prove Theorem \ref{main}, it remains to show that for any diagram automorphism $\th$ of $W_\aff$, we have \[\tag{*} \ell(\CO_x)=\lim_{n \to x} \frac{\ell(x^{\th, n})}{n}.\]

\subsection{The group $\bG'$ over $\mathbb C((\e))$} Let $\bG'$ be a connected semisimple group split over $\BC((\e))$ whose Iwahori-Weyl group is isomorphic to $W_\aff$. We write $\breve G'$ for $\bG'(\BC((\e)))$. Let $T'$ be a split maximal torus of $\bG'$ and $B' \supset T'$ be a Borel subgroup. Let $$\breve \CI'=\{g \in G(\BC[[\e]]); g \mid_{\e \mapsto 0} \in B'(\BC)\}$$ be an Iwahori subgroup of $\breve G'$. We have the decomposition $\breve G'=\sqcup_{x \in W_\aff} \breve \CI' x \breve \CI'$. 

The diagram automorphism $\th$ on $W_\aff$ can be lifted to a diagram automorphism on $\breve G'$, which we still denote by $\th$. We consider the $\th$-conjugation  action $\cdot_\th$ on $\breve G'$ here. 

Let $\CO \in B(W_\aff, \th)_{\text{str}}$. By \cite[\S3.2]{HN3}, $\breve G' \cdot_\th \breve \CI' x \breve \CI'=\breve G' \cdot_\th \breve \CI' x' \breve \CI'$ for any minimal length elements $x, x' \in \CO$. We write $[\CO]=\breve G' \cdot_\th \breve \CI' x \breve \CI'$ for any minimal length element $x \in \CO$. We have 

\begin{theorem}
$\breve G'=\bigsqcup_{\CO \in B(W_\aff, \th)_{\text{str}}} [\CO]$.
\end{theorem}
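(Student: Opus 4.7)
The plan is to adapt the proof of the $p$-adic bijection $\Psi: B(\tW, \s)_{\text{str}} \to B(\bG)$ stated earlier, with the diagram automorphism $\th$ playing the role of Frobenius and the ground field replaced by $\BC((\e))$. The two things to establish are that the $[\CO]$ cover $\breve G'$, and that they are pairwise disjoint.

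For covering, I would take $g \in \breve G'$, write $g \in \breve \CI' w \breve \CI'$ for the unique $w \in W_\aff$, and induct on $\ell(w)$. If $w$ is already $\th$-straight there is nothing to do. If $w$ is not of minimal length in its $\th$-conjugacy class, Theorem \ref{thm:min} produces a chain $w = w_0 \xrightarrow{s_1}_\th \cdots \xrightarrow{s_k}_\th w'$ with $\ell(w') < \ell(w)$; at each step, $\th$-conjugation by a representative $\dot s_i$ sends every point of $\breve \CI' w_{i-1} \breve \CI'$ into $\breve \CI' w_i \breve \CI' \cup \breve \CI' s_i w_i \th(s_i) \breve \CI'$, which lets us arrange $g$ to be $\th$-conjugate to a point in a strictly shorter double coset and then apply the inductive hypothesis. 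In the remaining case where $w$ is minimal length but not $\th$-straight, I would apply the partial conjugation technique of He--Nie: decompose $w = x\tau$ with $x$ lying in a standard finite parabolic $W_K$ ($K \subsetneq \tilde\BS$) and $\tau$ a length-zero element normalizing $W_K$ in the $\th$-twisted sense, so that $\breve \CI' w \breve \CI'$ sits inside the parahoric $\breve \CP_K$; a finite-type Bruhat-decomposition argument inside the Levi quotient of $\breve \CP_K$ then $\th$-conjugates $g$ into a strictly shorter double coset and the induction continues.

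For disjointness, I would use that every $g \in \breve G'$ admits two $\th$-conjugation invariants, namely the image in $\pi_1(\bG')_\th$ and a Newton point $\nu_g \in (X_*(T')^+_\BQ)^{\langle\th\rangle}$, extracted as $\lim_{n\to\infty} \tfrac{1}{n} \nu\bigl(g\,\th(g)\cdots\th^{n-1}(g)\bigr)$ via the slope homomorphism. For $g \in \breve \CI' x \breve \CI'$ with $x$ a $\th$-straight element of $\CO$ these invariants agree with the combinatorial invariants attached to $\CO$, and the parametrization of $B(W_\aff, \th)_{\text{str}}$ by such invariants is injective, so $[\CO] \cap [\CO'] \neq \emptyset$ forces $\CO = \CO'$. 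The main obstacle is the minimal-length-but-non-straight subcase of covering: setting up the correct parahoric descent and verifying that the He--Nie partial conjugation really reduces length in $W_\aff$ while being compatible with $\th$ (so that the inductive hypothesis actually applies inside $\breve \CP_K$) is the technical heart of the argument, and it relies on the explicit ``fundamental element'' structure theorem for minimal-length elements in non-straight $\th$-conjugacy classes of $W_\aff$.
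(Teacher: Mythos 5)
The paper does not actually prove this theorem: it cites \cite[Theorem 3.2]{HN3} (which treats reductive groups over $\kk((\e))$, $\kk$ algebraically closed of positive characteristic) and observes that the same argument applies over $\BC((\e))$. So there is no in-paper proof to compare against, and your proposal should be read as an attempt to reconstruct the cited argument. Your overall architecture is indeed the expected one: a Deligne--Lusztig-style length reduction plus a parahoric/Levi descent for the covering statement, and Kottwitz- and Newton-type $\th$-conjugation invariants for disjointness.

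That said, there are concrete gaps. The decomposition you invoke in the minimal-length-but-not-straight case, ``$w = x\tau$ with $x \in W_K$ and $\tau$ a \emph{length-zero} element,'' is not the He--Nie structure theorem and cannot be correct here: the only length-zero element of $W_\aff$ is the identity, which would force $w \in W_K$, yet minimal-length non-straight elements of $W_\aff$ can have a non-trivial $\th$-straight factor $v$ of positive length (the correct statement has $w \in W_K \cdot v$ with $v$ $\th$-straight, $\Ad(v)\circ\th(K) = K$, and $v$ of minimal length in $W_K v W_{\th(K)}$). As a result, the $\breve\CI' w \breve\CI' \subset \breve\CP_K$ claim fails, and the descent must be carried out inside $\breve\CP_K \cdot v$, where the relevant fact is that every element of the Levi quotient of $\breve\CP_K$ is $(\Ad(v)\circ\th)$-conjugate into a Borel — a statement about twisted conjugacy for a quasi-semisimple automorphism over $\BC$ (Steinberg/Spaltenstein), not a plain Bruhat-decomposition observation. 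On the disjointness side, $\bG'$ here is simply connected, so $\pi_1(\bG')_\th$ is trivial and the Kottwitz invariant gives nothing; the burden falls entirely on the Newton point, whose existence as $\lim_n \tfrac1n\bar\nu(g^{\th,n})$ (via the Cartan decomposition and a Fekete-type subadditivity argument), its $\th$-conjugation invariance, and its agreement with the combinatorial Newton point of a $\th$-straight $x$ for $g \in \breve\CI'x\breve\CI'$ all need to be established rather than asserted. Finally, the intermediate claim that $\dot s_i$-conjugation maps $\breve\CI' w_{i-1}\breve\CI'$ into $\breve\CI' w_i\breve\CI' \cup \breve\CI' s_i w_i \th(s_i)\breve\CI'$ is not the right form of the reduction lemma (note $s_i w_i \th(s_i) = w_{i-1}$, so you have written a tautology); the correct statement distinguishes the cases $\ell(s w \th(s)) = \ell(w)$ and $\ell(s w\th(s)) < \ell(w)$ as in \cite[Lemma 3.1]{He14}.
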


It is proved in \cite[Theorem 3.2]{HN3} for reductive groups over $\kk((\e))$, where $\kk$ is an algebraically closed field of positive characteristic. The same proof works over $\BC((\e))$. 

As a variation of Proposition \ref{prop:gen-2}, $\breve G \cdot_\s \overline{\breve \CI w \breve \CI}=\bigsqcup_{\CO \preceq_\s \CO_w} \Psi(\CO_w)$. Similarly we have the following result. 

\begin{proposition}
Let $x \in W_\aff$. Then $$\breve G' \cdot_\th \overline{\breve \CI' x \breve \CI'}=\bigsqcup_{\CO \preceq_\th \CO_x} [\CO].$$
\end{proposition}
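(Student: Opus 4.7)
The proof should parallel that of Proposition \ref{prop:gen-2}, with the decomposition $\breve G' = \bigsqcup_\CO [\CO]$ (just proved) playing the role that Kottwitz's classification $\breve G = \bigsqcup_{[b]} [b]$ plays in the non-archimedean case. First, from that decomposition the union appearing on the right is automatically disjoint, and
\[
\breve G' \cdot_\th \overline{\breve \CI' x \breve \CI'} \;=\; \bigsqcup_{\CO \in \CS_x} [\CO], \quad \CS_x := \{\CO \in B(W_\aff, \th)_{\text{str}} : [\CO] \cap \overline{\breve \CI' x \breve \CI'} \neq \emptyset\},
\]
since each $[\CO]$ is $\th$-conjugation stable. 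The task is therefore to identify $\CS_x$ with $\{\CO : \CO \preceq_\th \CO_x\}$.

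The inclusion $\supseteq$ is direct. If $\CO \preceq_\th \CO_x$, then by Proposition \ref{prop:gen-2} and transitivity of $\preceq_\th$ we have $\CO \preceq_\th x$, so there exists a minimal length element $z \in \CO$ with $z \le x$. Then any lift $\dot z \in \breve G'$ lies in $\breve \CI' z \breve \CI' \subset \overline{\breve \CI' x \breve \CI'}$, and by minimality of $z$ in $\CO$ one has $\dot z \in \breve G' \cdot_\th \breve \CI' z \breve \CI' = [\CO]$, so $[\CO] \cap \overline{\breve \CI' x \breve \CI'} \neq \emptyset$.

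For the harder inclusion $\subseteq$, the plan is to mimic the algorithm of \S\ref{sec:alg}, arguing by induction on $\ell(x)$. If $x$ is already of minimal length in its $\th$-conjugacy class, then $\CO_x$ is the straight class of $[x]_\th$, and in this case $\breve \CI' x \breve \CI' \subset [\CO_x]$ by the reduction lemma of \cite{HN3}; the only $[\CO]$'s that can meet $\breve \CI' y \breve \CI'$ for $y < x$ correspond (again by \cite{HN3} applied to $y$) to straight classes $\CO \preceq_\th y \le x$, and hence $\CO \preceq_\th \CO_x$. If $x$ is not of minimal length, choose by Theorem \ref{thm:min} an element $x' \approx_\th x$ and a simple reflection $s$ with $s x' \th(s) < x'$. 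On the group side, the relation $x' \approx_\th x$ gives $\breve G' \cdot_\th \overline{\breve \CI' x \breve \CI'} = \breve G' \cdot_\th \overline{\breve \CI' x' \breve \CI'}$; and because $\overline{\breve \CI' x' \breve \CI'}$ is stable under $\th$-twisted $s$-conjugation while $s x' \th(s) \le s x' \le x'$, a standard Bruhat-order manipulation (as in \S\ref{sec:alg}) shows $\breve G' \cdot_\th \overline{\breve \CI' x' \breve \CI'} = \breve G' \cdot_\th \overline{\breve \CI' (s x') \breve \CI'}$. By induction, $\CS_{sx'} \subseteq \{\CO \preceq_\th \CO_{sx'}\}$, and the combinatorial identity $\CO_{sx'} = \CO_{x'} = \CO_x$ from \S\ref{sec:alg} closes the induction.

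The main obstacle is the inductive step, namely verifying the key identity $\breve G' \cdot_\th \overline{\breve \CI' x' \breve \CI'} = \breve G' \cdot_\th \overline{\breve \CI' (s x') \breve \CI'}$ at the level of the full orbit closure rather than just the open cells: one must control how the lower-dimensional strata $\breve \CI' y \breve \CI'$ with $y < x'$ redistribute under the $s$-move, which is precisely the group-theoretic shadow of the Bruhat-order case analysis ($s w_1 > w_1$ versus $s w_1 < w_1$) carried out combinatorially in \S\ref{sec:alg}. Once this group-theoretic lift of the algorithm is established, the proof reduces to combining it with the combinatorial identification of $\CO_x$ already obtained.
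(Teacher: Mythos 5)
Your argument for the inclusion $\supseteq$ is correct. For the inclusion $\subseteq$, however, there is a genuine gap, which you yourself flag: the key identity $\breve G' \cdot_\th \overline{\breve \CI' x' \breve \CI'} = \breve G' \cdot_\th \overline{\breve \CI' (s x') \breve \CI'}$ in the inductive step is asserted as the thing that remains to be proved but is never established. Verifying it by hand requires a delicate case analysis: one must take each stratum $\breve \CI' y \breve \CI'$ with $y \le x'$ and $y \not\le sx'$ (so $sy < y$), apply the Deligne--Lusztig reduction lemma to replace $\breve \CI' y \breve \CI'$ by $\breve \CI' sy\th(s) \breve \CI'$ or $\breve \CI' sy\th(s) \breve \CI' \cup \breve \CI' sy \breve \CI'$, and then invoke Bruhat-order lifting properties to check that the resulting elements lie below $sx'$. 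None of this is carried out, so as written the proof does not close.

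More to the point, the induction is unnecessary, and the ingredient that makes it unnecessary already appears in your base case. There you cite \cite{HN3} for the statement that the $[\CO]$'s meeting $\breve \CI' y \breve \CI'$ are exactly those with $\CO \preceq_\th y$; but this identity $\breve G' \cdot_\th \breve \CI' y \breve \CI' = \bigsqcup_{\CO \preceq_\th y} [\CO]$ is available for every $y \in W_\aff$ (it is the $\BC((\e))$-analogue of the Kottwitz--Rapoport result of \cite{He16}, transported via \cite{HN3} exactly as Theorem 3.2 of the paper is), not merely for $y$ strictly below a minimal-length $x$. Granting it for all $y$, the proposition follows in one line: $\breve G' \cdot_\th \overline{\breve \CI' x \breve \CI'} = \bigcup_{y \le x} \breve G' \cdot_\th \breve \CI' y \breve \CI' = \bigcup_{y \le x} \bigsqcup_{\CO \preceq_\th y} [\CO] = \bigsqcup_{\CO \preceq_\th x} [\CO]$, and $\{\CO : \CO \preceq_\th x\} = \{\CO : \CO \preceq_\th \CO_x\}$ by the maximality of $\CO_x$ together with the transitivity argument you already used for $\supseteq$. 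This is what the paper means by calling the statement a ``variation of Proposition \ref{prop:gen-2}.'' So the correct fix is not to repair the inductive step but to recognize that the single-coset Kottwitz--Rapoport identity you are already invoking does all the work once it is applied to every $y \le x$ rather than just to $y$ below a minimal element.
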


\section{Generic elements coming from total positivity}
Let $x \in W_\aff$. Then for any $g \in \breve \CI' x \breve \CI'$, we have $g \in \sqcup_{\CO \preceq_\th \CO_x} [\CO]$ and $g^{\th, n} \in \sqcup_{x' \le x^{\ast_\th, n}} \breve \CI' x' \breve \CI'$ for all $n$. The advantage of working with $\BC((\e))$ instead of $\breve F$ is that we may prove the following result on the generic elements of $\breve \CI' x \breve \CI'$. 

\begin{proposition}\label{prop: generic}
Let $x \in W_\aff$. Then there exists $g \in \breve \CI' x \breve \CI'$ such that $g \in [\CO_x]$ and for any $n \in \BN$, $g^{\th, n} \in \breve \CI' x^{\ast_\th, n} \breve \CI'$. 
\end{proposition}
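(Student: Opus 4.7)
The plan is to construct $g$ via Lusztig's theory of total positivity, adapted to the affine Iwahori setting. First I would fix a reduced expression $x = s_{i_1} \cdots s_{i_k}$ with $k = \ell(x)$. The associated Bott--Samuelson style factorization gives a map $\BC^k \to \breve \CI' x \breve \CI'$ which is an open embedding, and by restricting to positive real parameters one obtains the \emph{totally positive part} $(\breve \CI' x \breve \CI')_{>0}$, a real-analytic submanifold of real dimension $k$ whose Zariski closure equals the entire Iwahori cell.

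The key multiplicative input from \cite{Lu-1} and \cite{Lu-2} is that for any $y, z \in W_\aff$ and any totally positive $h \in (\breve \CI' y \breve \CI')_{>0}$, $h' \in (\breve \CI' z \breve \CI')_{>0}$, one has
$$h \cdot h' \in \bigl(\breve \CI' (y \ast z) \breve \CI'\bigr)_{>0}.$$
The Demazure product appears naturally here because positivity selects the top-dimensional stratum in the product of two Iwahori cells, rather than landing in lower strata. Because $\th$ is a diagram automorphism, it permutes the simple reflections and preserves positivity: $\th\bigl((\breve \CI' y \breve \CI')_{>0}\bigr) = (\breve \CI' \th(y) \breve \CI')_{>0}$. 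Iterating the multiplication rule, any totally positive $g \in (\breve \CI' x \breve \CI')_{>0}$ satisfies
$$g^{\th, n} = g \cdot \th(g) \cdots \th^{n-1}(g) \in \bigl(\breve \CI' x^{\ast_\th, n} \breve \CI'\bigr)_{>0} \subset \breve \CI' x^{\ast_\th, n} \breve \CI' \quad \text{for all } n \ge 1,$$
so the second requirement in the proposition holds automatically for any $g$ drawn from the positive part.

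To arrange the first requirement $g \in [\CO_x]$, I would invoke the proposition immediately preceding the statement: $\breve G' \cdot_\th \overline{\breve \CI' x \breve \CI'}$ is stratified by $[\CO]$ for $\CO \preceq_\th \CO_x$, with $[\CO_x]$ the unique top stratum. Consequently $[\CO_x] \cap \breve \CI' x \breve \CI'$ is Zariski-open and dense in $\breve \CI' x \breve \CI'$. Since the totally positive locus is a real-analytic submanifold whose Zariski closure is the full Iwahori cell, it cannot lie inside any proper Zariski-closed subset, hence must meet $[\CO_x] \cap \breve \CI' x \breve \CI'$. Selecting $g$ from this intersection produces the desired element.

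The main obstacle is to rigorously transplant Lusztig's positivity theory, originally developed for finite flag varieties, to the present affine Iwahori setting---in particular verifying the multiplication rule linking positivity with the Demazure product and its compatibility with the diagram automorphism $\th$. Once this positivity framework is in hand, the existence of $g$ follows by combining positivity with the density of the generic stratum $[\CO_x]$.
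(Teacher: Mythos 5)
Your treatment of the second requirement matches the paper exactly: you invoke Lusztig's multiplication rule $U^-_{y} \cdot U^-_{z} = U^-_{y \ast z}$ (cited in the paper as \cite[\S2.11]{Lu-2}), note compatibility with the diagram automorphism $\th$, and iterate to get $g^{\th,n}\in \breve \CI' x^{\ast_\th,n}\breve \CI'$. That part is fine.

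The gap is in your argument that some totally positive $g$ lies in $[\CO_x]$. You assert that the totally positive locus ``is a real-analytic submanifold whose Zariski closure equals the entire Iwahori cell.'' If ``Iwahori cell'' means $\breve \CI' x \breve \CI'$ inside the loop group $\breve G'$, this is false: $U^-_x$ is parameterized by $(\BR_{>0})^{\ell(x)}$, so its Zariski closure is a finite-dimensional subvariety of the infinite-dimensional double coset. If you instead mean the Schubert cell $\breve \CI' x \breve \CI'/\breve \CI'\cong \BA^{\ell(x)}$ in the affine flag variety, the density claim for $U^-_x$ is fine, but now the Newton condition does not obviously descend: $[\CO_x]$ is invariant under $\th$-twisted conjugation $g\mapsto h g\th(h)^{-1}$, not under right multiplication by $\breve \CI'$. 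Over $\bar{\BF}_q$ one could try to relate these via a Lang-type argument, but here we are over $\BC((\e))$ and $\th$ is a diagram automorphism rather than a Frobenius, so Lang's theorem is unavailable and the two equivalence relations need not coincide on a double coset. You would also have to independently establish that $[\CO_x]\cap\breve\CI' x\breve\CI'$ is Zariski-open dense in whichever finite-dimensional model you work with, which is extra input not supplied.

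The paper sidesteps all these density questions and proves the stronger fact that \emph{every} $g\in U^-_x$ lies in $[\CO_x]$, by an explicit induction on $\ell(x)$ using cyclic shifts. The key computation is that for $sy<y$, rotating the first factor $y_{i_1}(a_1)$ of an element of $U^-_y$ to the end (a single $\th$-conjugation by that factor) produces an element of $U^-_{sy}\cdot U^-_{\th(s)}=U^-_{(sy)\ast\th(s)}$; combined with Theorem~\ref{thm:min} and the algorithm of \S\ref{sec:alg} this reduces to minimal length elements, where $U^-_{x}\subset\breve\CI' x\breve\CI'\subset[\CO_x]$ is immediate. You should replace your density argument with this explicit conjugation induction.
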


\begin{proof}
The idea is to use Lusztig's theory of total positivity. 
Recall that $\tilde \BS$ is the set of positive affine simple roots of $\breve G'$. For any $i \in \tilde \BS$, let $\a_i$ be the corresponding affine simple root and $U_{\a_i} \subset \breve G'$ be the corresponding affine root subgroup. Let $\{x_i: \BG_m \to U_{\a_i}; i \in \tilde \BS\}$ be an affine pinning of $\bG'$ (see \cite[\S 5.3]{GH}). Since $\th$ is a diagram automorphism of $\bG'$, we may choose $\{x_i\}$ to be $\th$-stable. Let $U_{-\a_i}$ be the affine root subgroup correspond to $-\a_i$. Let $y_i: \BG_m \to U_{-\a_i}$ be the isomorphism such that $x_i(1) y_i(-1) x_i(1) \in N(T')$\footnote{This normalization of $y_i$ differs from \cite{GH} but is consistent with the normalization used in Lusztig's theory of total positivity.}.

Let $y \in W_\aff$ and $y=s_{i_1} \cdots s_{i_k}$ be a reduced expression of $y$. Set $$U^-_y=\{y_{i_1} (a_1) \cdots y_{i_k}(a_k); a_1, \ldots, a_k \in \BR_{>0}\}.$$

By \cite[\S2.5]{Lu-2}, $U^-_y$ is independent of the choices of reduced expressions of $y$. Moreover, since $y_i(a) \in \breve \CI' s_i \breve \CI'$ for any $i \in \tilde \BS$ and $a \neq 0$, we have $U^-_y \subset \breve \CI' y \breve \CI'$. 

By \cite[\S2.11]{Lu-2}, for any $y_1, y_2 \in W_\aff$, we have $U^-_{y_1} U^-_{y_2}=U^-_{y_1 \ast y_2}$.

In particular, for any $g \in U^-_x$ and $n \in \BN$, we have $$g^{\th, n} \in U^-_x \cdot U^-_{\th(x)} \cdots U^-_{\th^{n-1}(x)} \subset U^-_{x^{\ast_\th, n}} \subset \breve \CI' x^{\ast_\th, n} \breve \CI'.$$

We show that 

(a) If $s y<y$ for some simple reflection $s$, then any element in $U^-_y$ is $\th$-conjugate to an element in $U^-_{(s y) \ast \th(s)}$. 

By definition, there exists a reduced expression of $y$ with $y=s_{i_1} \cdots s_{i_k}$ and $s_{i_1}=s$. Thus $y_{i_1} (a_1) \cdots y_{i_k}(a_k)$ is $\th$-conjugate to $$y_{i_2} (a_2) \cdots y_{i_k}(a_k) \th(y_{i_1}(a_1))=y_{i_2} (a_2) \cdots y_{i_k}(a_k) y_{\th(i_1)}(a_1).$$ If $a_1, \ldots, a_k>0$, then $y_{i_2} (a_2) \cdots y_{i_k}(a_k) y_{\th(i_1)}(a_1) \in U^-_{s y} U^-_{\th(s)}=U^-_{(s y) \ast \th(s)}$. (a) is proved. 

Now we show that $g \in [\CO_x]$. We argue by induction on $\ell(x)$. 

If $x$ is a minimal length element in its $\th$-conjugacy class of $W_\aff$, then by the reduction argument in \cite[Lemma 3.1]{He14}, $g \in \breve \CI' x \breve \CI' \subset [\CO_x]$. If $x$ is not a minimal length element in its $\th$-conjugacy class of $W_\aff$, then by Theorem \ref{thm:min}, there exists $x' \in W_{\aff}$ and a simple reflection $s$ such that $x' \approx_\th x$ and $s x' \th(s)<x'$. We have $s x'<x'$ and $(s x') \ast \th(s)=s x'$. By (a), any element in $U^-_x$ is $\th$-conjugate to an element in $U^-_{x'}$ and any element in $U^-_{x'}$ is $\th$-conjugate to an element in $U^-_{s x'}$. By inductive hypothesis on $s x'$, we have $U^-_{s x'} \subset [\CO_{s x'}]$. By \S\ref{sec:alg}, $\CO_{s x'}=\CO_{x'}=\CO_x$. 

This finishes the proof. 
\end{proof}

\subsection{Proof of Theorem \ref{main}} Now we prove Theorem \ref{main}. As explained in \S\ref{sec:reduction}, it suffices to prove \S\ref{sec:reduction}(*).  By Proposition \ref{prop: generic}, there exists $g \in \breve \CI' x \breve \CI'$ such that $g \in [\CO_x]$ and for any $n \in \BN$, $g^{\th, n} \in \breve \CI' x^{\ast_\th, n} \breve \CI'$. Let $x'$ be a minimal length element in $\CO_x$. Then $x'$ is a $\th$-straight element. Since $g \in [\CO_x]$, there exists $h \in \breve G'$ and $g' \in \breve \CI' x' \breve \CI'$ such that $g'=h g \th(h) \i$. 

Since $x'$ is $\th$-straight, we have $$(g')^{\th, n} \in (\breve \CI' x' \breve \CI') (\breve \CI' \th(x') \breve \CI') \cdots (\breve \CI' \th^{n-1}(x') \breve \CI')=\breve \CI' (x')^{\th, n} \breve \CI'.$$

On the other hand, $$(g')^{\th, n}=h g^{\th, n} \th^n(h) \i \in h (\breve \CI' x^{\ast_\th, n} \breve \CI') \th^n(h) \i.$$

We have $h \in \breve \CI' y \breve \CI'$ for some $y \in W_\aff$. Let $N_0=\ell(y)$. Then $\th^n(h) \in \breve \CI' \th^n(y) \breve \CI'$ and $\ell(\th^n(y))=N_0$. 

Note that \begin{align*} h (\breve \CI' x^{\ast_\th, n} \breve \CI') \th^n(h) \i & \subset (\breve \CI' y \breve \CI') (\breve \CI' x^{\ast_\th, n} \breve \CI') (\breve \CI' \th^n(y) \breve \CI') \\ & \subset \bigsqcup_{z \in W_\aff; \ell(x^{\ast_\th, n})-2 N_0 \le \ell(z) \le \ell(x^{\ast_\th, n})+2 N_0} \breve \CI' z \breve \CI'.
\end{align*}

Therefore $$\ell(x^{\ast_\th, n})-2 N_0 \le \ell((x')^{\th, n}) \le \ell(x^{\ast_\th, n})+2 N_0$$ for all $n \in \BN$. Since $x'$ is $\th$-straight, $\ell((x')^{\th, n})=n \ell(x')$. Thus $$\frac{\ell(x^{\ast_\th, n})}{n}-\frac{2 N_0}{n} \le \ell(x') \le \frac{\ell(x^{\ast_\th, n})}{n}+\frac{2 N_0}{n}.$$ Therefore $\ell(\CO_x)=\ell(x')=\lim_{n \to \infty} \frac{\ell(x^{\ast_\th, n})}{n}$. The proof is finished.


\begin{thebibliography}{AA999}
\bibitem[BS17]{BS}
B. Bhatt and P. Scholze, \emph{Projectivity of the Witt vector Grassmannian}, Invent. math. \textbf{209} (2017), 329--423.

\bibitem[GH21+]{GH}
R. Ganapathy and X. He, \emph{Tits groups of Iwahori-Weyl groups and presentations of Hecke algebras}, arXiv:2107.01768.

\bibitem[He14]{He14}
X. He, \emph{Geometric and homological properties of affine Deligne-Lusztig varieties}, Ann. of Math. (2) 179 (2014), 367--404.

\bibitem[He15]{He15}
X. He, \emph{Centers and cocenters of $0$-Hecke algebras}, Representations of reductive groups, 227--240, Prog. Math. Phys., 312, Birkh\"auser/Springer, Cham, 2015.

\bibitem[He16a]{He16}
X. He, \emph{Kottwitz-Rapoport conjecture on unions of affine Deligne-Lusztig varieties},  Ann. Sci. \`Ecole Norm. Sup. 49 (2016), 1125--1141.

\bibitem[He16b]{He-CDM}
X. He, \emph{Hecke algebras and $p$-adic groups}, Current developments in mathematics 2015, 73--135, Int. Press, Somerville, MA, 2016.

\bibitem[HN14]{HN2}
X. He and S. Nie, \emph{Minimal length elements of extended affine Weyl group}, Compos. Math. 150 (2014), 1903--1927.

\bibitem[HN20]{HN3}
X. He, S. Nie, \emph{A geometric interpretation of Newton strata}, Selecta Math. (N.S.) 26 (2020), no. 1, Art. 4, 16 pp.

\bibitem[Ko85]{Ko1} R.~Kottwitz, \emph{Isocrystals with additional structure}, Compos.~Math.~\textbf{56} (1985), 201--220.

\bibitem[Ko97]{Ko2} R.~Kottwitz, \emph{Isocrystals with additional structure. {II}}, Compos.~Math. \textbf{109} (1997), 255--339.

\bibitem[Lu94]{Lu-1}
G.~Lusztig, \emph{Total positivity in reductive groups} Lie theory and geometry, 531--568, Progr. Math., 123, Birkh\"auser Boston, Boston, MA, 1994. 

\bibitem[Lu19]{Lu-2}
G.~Lusztig, \emph{Total positivity in reductive groups, II}, Bull. Inst. Math. Acad. Sinica (N.S.) 14(2019), 403--460.

\bibitem[Mi16+]{Mi}
E. Mili\'cevi\'c, \emph{Maximal Newton points and the quantum Bruhat graph}, to appear in Michigan Math. J., preprint available online at arXiv:1606.07478, 2016.

\bibitem[MV20]{MV}
E. Mili\'cevi\'c and E. Viehmann, \emph{Generic Newton points and the Newton poset in Iwahori double cosets}, Forum Math. Sigma 8 (2020), Paper No. e50, 18 pp.

\bibitem[Ra05]{Ra-guide}
M.~Rapoport, \emph{A guide to the reduction modulo {$p$} of {S}himura varieties}, Ast\'erisque \textbf{298} (2005), 271--318.

\bibitem[Vi14]{Vi}
E.~Viehmann, \emph{Truncations of level $1$ of elements in the loop group of a reductive group}, Ann. of Math. (2) 179 (2014), no. 3, 1009--1040.

\bibitem[Zh17]{Zhu}
X.~Zhu, \emph{Affine Grassmannians and the geometric Satake in mixed characteristic}, Ann. of Math. (2) \textbf{185} (2017),  403--492.

\end{thebibliography}
\end{document}